\newtheorem{proposition}{Proposition}[section]
\newtheorem{theorem}[proposition]{Theorem}
\newtheorem{remark}{Remark}[section]
\newcommand{\punto}{\,\,\cdot\,\,}
\newcommand{\ds}{\displaystyle}
\newcommand{\smallfrac}[2]{{\textstyle\frac{#1}{#2}}}
\newcommand{\jump}[1]{[\![#1]\!]}
\title{Energy estimates for Galerkin semidiscretizations of time domain boundary integral equations}
\author{Francisco--Javier Sayas\footnote{Department of Mathematical Sciences, University of Delaware,
Newark DE 19716, USA -- e--mail: {\tt fjsayas@math.udel.edu}}}
\date{\today}
\begin{document}

\maketitle

\begin{abstract}
In this paper we present a battery of results related to how
Galerkin semidiscretization in space affects some formulations of
wave scattering and propagation problems when retarded boundary
integral equations are used.
\end{abstract}

\section{Introduction}

In this paper we present several results concerning the effect of
Galerkin semidiscretization in space when applied to formulations
where time domain boundary integral equations are used. The focus
will be set on energy conservation properties (or lack thereof) and
the model equation will be the acoustic wave equation in two and
three dimensions.

While frequency domain boundary integral equations are very often
used for scattering problems and for construction of absorbing
boundary conditions, their time domain counterparts have enjoyed a
more limited success. The work of Alain Bamberger and Toung Ha-Duong
\cite{BamHaD86a}--\cite{BamHaD86b} in the mid-80s sparked
theoretical and practical activity concerning time domain integral
equations for acoustics, elastodynamics and electromagnetism. A
detailed account of this early work can be found in the review
articles \cite{HaD03} and \cite{Cos03}. It is nowadays well
understood that some of the apparently unsurmountable difficulties
in the development stable discretizations of the time domain
integral equations are related to the correct approximation of
integrals and that these equations can be advantageously used in
situations where frequency domain formulations are not competitive
(scattering of frequency rich noise) or applicable (coupling with
nonlinear problems). In this paper we will study semidiscretization
in space with general Galerkin schemes, which can be complemented
either with Galerkin in time discretization (following the
philosophy of \cite{BamHaD86a} and its consequences) or with
Convolution Quadrature in time (following the seminal work of
Christian Lubich \cite{Lub94} and its development in \cite{Ban10},
\cite{BanSau09}, \cite{Sch01}, \cite{BanLubMelTA}).

From the point of view of the type of problems we are going to
study, we can classify them in two groups:
\begin{itemize}
\item[(a)] Scattering of acoustic waves by impenetrable obstacles.
This leads to an exterior wave equation with Dirichlet or Neumann
boundary conditions on the surface of the obstacles. Instead of
dealing with the traditional (and more practical) problem of an
incident plane or spherical wave (that has no finite energy), we
will study the effect of the scattering of a free wave that
propagates from some compactly supported initial conditions.
\item[(b)] Propagation of waves originated in a locally
non-homogeneous medium surrounded by a homogeneous medium. In this
case, we will also follow the propagation of compactly supported
initial data when we surround the computational domain by an
artificial boundary where we impose non-local boundary conditions
using time domain integral equations that give an exact expression
of the corresponding Steklov-Poincar\'{e} operators.
\end{itemize}
In order to present the techniques and results in a clear way we
will first focus on the problem of scattering of waves by a
sound-soft obstacle, formulated using a single-layer potential
representation (i.e., with an indirect boundary integral method). We
will show that any Galerkin semidiscretization in space of the
corresponding retarded integral equation is stable from the point of
view that a certain energy functional is preserved over time. In
fact, we will show that the total energy that is conserved has to be
computed counting potential and kinetic energy in the exterior and
interior of the obstacle, thus giving theoretical evidence of the
fact that energy is leaked to the interior of the scatterer, but the
total balance of energy is still correct. In a second step, we
introduce the second class of problems where a locally
non-homogeneous medium is surrounded with an artificial boundary
(that also surrounds the support of the initial data) and we follow
the propagation of these initial data when we discretize in space
with a Galerkin scheme both for the variational formulation of the
wave equation in the computational domain and for a system of two
retarded boundary integral equations that are coupled with the
interior wave equation to construct an absorbing boundary condition
on the artificial interface. At the discrete level, this can be
understood as a time domain coupling of Boundary and Finite Elements
(BEM-FEM). The result that we show is very similar to the one for
the scattering problem: energy is conserved as a function of time
(independently of Galerkin discretization) as long as we count the
energy of a ghost wave in the computational domain --the energy of
this interior wave has to be computed using the material properties
of the surrounding medium and is added to the natural energy of the
total wave in the interior and exterior domains.

Once these two model situations have been analyzed, we present two
very simple extensions where the same techniques apply to establish
similar results. The first extension deals with the scattering of a
sound-hard obstacle formulated with a double-layer acoustic
potential. The second one is the construction of a tighter integral
absorbing boundary condition using two integral equations but only
one boundary unknown. We next show that there are formulations where
energy is not conserved after space discretization. Two of them are
explored: the use of a direct integral formulation (based on
Kirchhoff's formula) for the sound-soft scattering problem and the
construction of absorbing boundary conditions with only one integral
equation. While the negative results do not imply that these
discretizations are unstable, they might indicate some undesirable
effects. Actually, the direct integral formulation for the
sound-soft scattering problem, fully discretized with a Galerkin
method in space and Convolution Quadrature (CQ) in time, can be
analyzed \cite{BanLalSayIP} (See below for more on this.) As for the
one-equation absorbing boundary condition, at present time there
seems to be insufficient theoretical and practical evidence to
suggest convergence or lack thereof. {\em The extension of all the
results in this paper to linear elastic waves is straightforward.}
Extension to other types of elastic waves (viscoelastic or
poroelastic, for instance) and to electromagnetic waves will
possibly require more effort.

Apart from the inherent interest of having theoretical evidence of
energy preservation after space discretization of some integral and
integro-differential model problems for which this energy
conservation property does not seem to be evident, the type of
results that we prove in this paper has some other applications. On
the one hand, the transformation of the semidiscrete equations to a
transmission problem is allowing us to provide an analysis of the
CQ-BEM for some scattering problems \cite{BanLalSayIP}. Up to the
present time, analysis of discrete methods for retarded integral
equations was based on estimates in the Laplace domain. The
dynamical system approach that we develop in this paper can be used
to obtain better estimates of the effect of space discretization and
approximation of data in some retarded integral equations. It is
also our belief that these techniques will be advantageous for some
kind of analysis of fully discrete BEM-FEM schemes using CQ. In this
sense, this paper can be taken as the time domain counterpart of
what was done in \cite{LalSay09} in the Laplace domain and we hope
to be able to build on these two articles for the complete analysis
of the fully discrete method.

This paper offers a novel approach to the analysis of time-domain
integral equations and their semidiscretization in space.
Theoretical study of retarded integral equations is almost
exclusively carried out in the Laplace domain. Invertibility of the
corresponding boundary integral equations is established in the
resolvent set of the Laplacian and bounds depending on the Laplace
transformed parameter are used to move back to the time domain.
While this proved to be an extremely powerful approach, leading to
solid estimates and useful in the analysis of CQ-type time
discretization, it is also clear that some properties are lost in
the process of transforming back and forth to the Laplace domain and
the some of the functional spaces that this kind of analysis impose
might not be optimal. This paper offers a complementary point of
view using elementary techniques from the theory of evolution
equations based on strongly continuous semigroups (cf.
\cite{EngNag06}, \cite{Kes89}). The main idea is recasting the
semidiscrete problem as a Cauchy problem for an equation of the
second order associated to an unbounded operator. Once the problem
is reexpressed as a second order equation in time, we use basic
estimates in the simplest possible setting (Hilbert space theory and
a coincidence of two of the four possible spaces that appear in the
abstract formulation) in order to simply derive the energy
conservation property. The kind of unbounded operator that will
appear in this formulation is very closely related to the exotic
transmission problems that Antonio Laliena and the author of this
paper devised for the analysis of CQ schemes applied to non-trivial
acoustic scattering problems in \cite{LalSay09}. The surprisingly
simple (and quite effective) idea consists of dealing with the
different aspects of Galerkin discretization (the fact that the
unknown is sought in a discrete space and the fact that the
corresponding equation is tested with the same space) as
non-standard transmission conditions.

The paper is organized as follows. In Section \ref{sec:1} we present
the indirect indirect formulation of the sound-soft scattering
problem and its Galerkin semidiscretization, and we state the energy
conservation result for this problem. The proof of this result is
carried out in Section \ref{sec:2} by using the techniques that have
been described before, and taking advantage of well known results
for the single layer potential associated to the Yukawa
(reaction-difussion) equation. In Section \ref{sec:3} we present a
boundary integral absorbing boundary condition for a wave
propagation problem in free space and its Galerkin (BEM-FEM)
discretization, and we state the corresponding energy estimate. The
proof of this result is the content of Section \ref{sec:4}. Section
\ref{sec:5} presents two easy generalizations: indirect formulation
for sound-hard scattering and a different absorbing boundary
condition.  In Section \ref{sec:6} we give two partially negative
results, showing that space discretization does not lead to energy
conservation either in a direct formulation of a scattering problem
or a one-equation absorbing boundary condition. Finally, Section
\ref{sec:A} includes the abstract frame of Cauchy problems for
equations of the second order associated to a class of unbounded
operators. These results are easy modifications of theorems that are
well known in the literature of $C_0-$semigroups and are presented
here in order to have an easy reference in the same language that we
are using in the paper.

\paragraph*{Notational foreword.}
Basic elementary results on Sobolev spaces will be assumed
throughout without specific reference. All of them can be found in
any text on the subject (\cite{AdaFou03} for instance). Given an
open set $U$ with Lipschitz boundary, we will consider the spaces
$L^2(U)$ and $H^m(U)$ for $m\ge 1$. The $L^2(U)-$norm will be
denoted $\| \punto\|_U$ and the $H^1(U)-$norm $\|\punto\|_{1,U}$.
Two fractional Sobolev spaces will be used on the boundary
$H^{\pm1/2}(\partial U)$. The space $H^1_0(U)$ is the kernel of the
trace operator $H^1(U)\to H^{1/2}(\partial U)$. The characteristic
function of $U$ will be denoted $\chi_U$.

Although the geometric layout of the different parts of this article
will vary, in all cases, there will be a bounded open set $\Omega$,
with exterior $\Omega^+:=\mathbb R^d\setminus\overline\Omega$ and
common Lipschitz boundary $\Gamma:=\partial\Omega=\partial\Omega^+$.
There are two possible traces on $\Gamma$, which will be
respectively denoted $\gamma^+:H^1(\Omega^+)\to H^{1/2}(\Gamma)$ and
$\gamma^-:H^1(\Omega) \to H^{1/2}(\Gamma)$. Whenever there is a
single trace or functions are exclusively defined in the interior
domain, the superscript will be dropped.

For functions of space and time variables $u(\mathbf x,t)$, we will
often employ the notation of theory of evolution equations, where
only the time variable is displayed. This amounts to considering
functions $u:[0,T]\to X$, where $X$ is a space of functions of the
$\mathbf x$ variable. With this notation, $\dot u$ and $\ddot u$
denote the first and second $t-$derivatives of $u(t)$.

\section{Single layer potentials for sound-soft
scattering}\label{sec:1}

Consider a bounded open set $\Omega\subset \mathbb R^d$, with
Lipschitz boundary $\Gamma:=\partial\Omega$ such that
$\Omega^+:=\mathbb R^d\setminus\overline\Omega$ is connected
($\Omega$ does not need to be connected though). Let $u_0$ and $v_0$
be given functions such that $\mathcal O:=\mathrm{supp}\, u_0 \cup
\mathrm{supp}\, v_0$ is compact and $\mathcal O\cap
\overline{\Omega}=\emptyset.$ We consider the following problem of
scattering of acoustic waves by a sound-soft obstacle: we look for
$u:[0,\infty) \to H^1(\Omega^+)$ such that for all $t>0$
\begin{subequations}\label{1.eq1}
\begin{alignat}{4}
\ddot u = \Delta u & & \qquad & \mbox{in $\Omega^+ $ },\label{1.eq1.a}\\
u=0 & & & \mbox{on $\Gamma $},\label{1.eq1.b}
\end{alignat}
\end{subequations}
and the initial conditions (on $\Omega^+$)
\begin{equation}\label{1.eq1b}
u(0)=u_0, \qquad \dot u(0)=v_0
\end{equation}
are satisfied. We will additionally assume that $u_0 \in
H^2_0(\Omega^+)$ and $v_0 \in H^1_0(\Omega^+)$. Let $R>0$ be such
that
\begin{equation}\label{1.eq11}
\overline\Omega \cup\mathcal O \subset B(0;R):=\{ \mathbf x \in
\mathbb R^d\,:\, |\mathbf x|< R\}.
\end{equation}
Because of the finite speed of propagation of waves, solutions of
problem \eqref{1.eq1}-\eqref{1.eq1b} are supported in $B(0;R+T)$ for
all $t\in [0,T]$.

The solution of \eqref{1.eq1}-\eqref{1.eq1b} will be decomposed as
the sum of a free (or incident) wave and a scattered wave. To define
the free wave we need to extend the initial data by zero to the
interior of the scatterer:
\[
\mathrm E v := \left\{ \begin{array}{ll} v \quad & \mbox{in
$\Omega^+$},\\ 0 & \mbox{in $\Omega$}.
\end{array}\right.
\]
The free wave is the solution of the wave equation in $\mathbb R^d$
for $t>0$
\begin{equation}\label{1.eq3}
\ddot u^{\mathrm{free}} = \Delta u^{\mathrm{free}}
\end{equation}
satisfying initial conditions
\begin{equation}\label{1.eq3a}
u^{\mathrm{free}}(0)=\mathrm E u_0, \qquad \dot
u^{\mathrm{free}}(0)=\mathrm Ev_0.
\end{equation}
The solution to this problem satisfies
\begin{equation}\label{1.eq3b}
u^{\mathrm{free}}\in \mathcal C^2([0,\infty); L^2(\mathbb R^d))\cap
\mathcal C^1 ([0,\infty); H^1(\mathbb R^d))\cap \mathcal
C([0,\infty), H^2(\mathbb R^d))
\end{equation}
and $\mathrm{supp}\, u^{\mathrm{free}}(t) \subset B(0;R+t)$ for all
$t$. The scattered wave is the difference $ u-u^{\mathrm{free}}$.

\begin{remark}\rm
For the kind of arguments that we are going to develop, we will
assume that the free wave is known. Section \ref{sec:3} deals with a
semidiscrete version of a problem that generalizes
\eqref{1.eq3}-\eqref{1.eq3a}, leading to algorithms to compute the
propagation of compactly supported initial data in free space. In
applications, the free or incident wave is actually known. Typically
the free wave is the solution of a non--homogeneous wave equation
and has singularities at some points away from the scatterer. Other
practical incident waves include plane waves that are not compactly
supported at any given time and have infinite energy. Because our
interest lies in how discretization affects the integral model used
to represent $u-u^{\mathrm{free}}$, the use of free waves as those
of \eqref{1.eq3}-\eqref{1.eq3a} fulfills our needs.
\end{remark}

The scattered wave $u-u^{\mathrm{free}}$ can be represented with a
single layer retarded potential \cite{BamHaD86a}. For $(\mathbf
x,t)\in \Omega^+\times[0,\infty)$ and a given density
$\psi:[0,\infty) \to H^{-1/2}(\Gamma)$ the single layer potential is
defined as
\begin{equation}\label{1.eq4}
(\mathcal S* \psi)(\mathbf x,t) = \left\{
\begin{array}{ll} \ds \int_\Gamma \frac{\psi(\mathbf y,t-|\mathbf
x-\mathbf y|)}{4\pi |\mathbf x-\mathbf y|}\mathrm d\Gamma(\mathbf y)
 & \mbox{(when $d=3$),}\\
\ds\int_\Gamma \int_0^{t-|\mathbf x-\mathbf
y|}\hspace{-15pt}\frac{\psi(\mathbf
y,\tau)}{\sqrt{(t-\tau)^2-|\mathbf x-\mathbf y|^2}}\mathrm
d\Gamma(\mathbf y)\mathrm d\tau & \mbox{(when $d=2$).}
\end{array}\right.
\end{equation}
The integral forms in \eqref{1.eq4} are only valid for smooth
densities. Weak forms of the potentials have to be used in general
(see \cite{LalSay09b} for full justification in the three
dimensional case). We will keep the convolutional notation  for the
layer potential $\mathcal S*\psi$ in order to distinguish time
domain potentials and operators from similar entities for steady
state problems. Let
\[
\mathcal V*\psi :=\gamma (\mathcal S*\psi)
\]
be the corresponding single layer integral operator (see
\cite{BamHaD86a} and \cite{HaD03}). The indirect representation of
the scattered field looks for a density $\psi:\mathbb R \to
H^{-1/2}(\Gamma)$ such that
\begin{equation}\label{1.eq5}
\psi\equiv 0 \mbox{ in $(-\infty,0)$}\qquad \mathcal V*\psi+\gamma
u^{\mathrm{free}}=0
\end{equation}
(this is an equation on $\Gamma\times [0,\infty)$) and then
represents the total wave by
\begin{equation}\label{1.eq6}
u=\mathcal S*\psi+u^{\mathrm{free}}.
\end{equation}
Equation \eqref{1.eq5} is now approximated with a Galerkin method
{\em only in the space variable}. To do that, we choose a sequence
of finite dimensional spaces
\[
X_h \subset H^{-1/2}(\Gamma),
\]
approximate \eqref{1.eq5} by the problem
\begin{equation}\label{1.eq7}
\left[\begin{array}{l} \psi_h:\mathbb R \to X_h,\qquad \psi_h \equiv 0 \mbox{ in $(-\infty,0)$},\\[1.5ex]
\langle \mu_h,\mathcal V*\psi_h+\gamma
u^{\mathrm{free}}\rangle_\Gamma =0 \qquad \forall\mu_h \in X_h \quad
\forall t,
\end{array}\right.
\end{equation}
and write an approximation of the total wave as
\begin{equation}\label{1.eq8}
u_h:=\mathcal S*\psi_h+u^{\mathrm{free}}.
\end{equation}
The angled bracket in \eqref{1.eq7} denotes the $H^{-1/2}(\Gamma)
\times H^{1/2}(\Gamma)$ duality product. Note that
$\mathrm{supp}\,u_h(t)\subset B(\mathbf 0;R+t)$ for all $t$, because
the layer potential $\mathcal S*\psi$ propagates a wave from
$\Gamma$ at unit velocity starting at time $t=0$.

\begin{remark}\rm Problem \eqref{1.eq7} is a system of functional
equations. Let us assume that $X_h \subset L^\infty(\Gamma)$ and
that $\{ N_j\,:\, j=1,\ldots m\}$ is a basis of $X_h$. We can then
write the unknown density as
\[
\psi_h(\mathbf y,t)=\sum_{j=1}^M \psi_j(t) N_j(\mathbf x) \qquad
\psi_j:\mathbb R \to \mathbb R, \qquad \mathrm{supp}\,\psi_j \subset
(0,\infty).
\]
In the three dimensional case, \eqref{1.eq7} is then equivalent to
\begin{eqnarray}\nonumber
\sum_{j=1}^M \int_\Gamma \int_\Gamma \frac{N_j(\mathbf y)N_i(\mathbf
x)}{4\pi |\mathbf x-\mathbf y|} \psi_j(t-|\mathbf x-\mathbf y|)
\mathrm d\Gamma(\mathbf x)\mathrm d\Gamma(\mathbf y)=-\int_\Gamma
N_i(\mathbf x) u^{\mathrm{free}}(\mathbf x,t)\mathrm d\Gamma(\mathbf
x) & &\\
 i=1,\ldots,M.\label{1.eq20}
\end{eqnarray}
This system of functional equations include integrated delays of all
the unknowns.
\end{remark}

We are now in conditions to state the main theorem of this part of
the article, showing energy conservation and regularity in time for
the semidiscrete total wave field. The proof of this theorem will be
given in Section \ref{sec:2}.

\begin{theorem}\label{1.th1}
The semidiscrete total field $u_h$ and the associated density
$\psi_h$, given by \eqref{1.eq7}-\eqref{1.eq8}, satisfy
\begin{eqnarray}
\label{1.eq9b} u_h &\in& \mathcal C^2([0,\infty); L^2(\mathbb
R^d))\cap \mathcal C^1([0,\infty);H^1(\mathbb R^d)),\\
\label{1.eq9c} \psi_h &\in& \mathcal C([0,\infty);
H^{-1/2}(\Gamma)).
\end{eqnarray}
Moreover, the energy
\begin{equation}\label{1.eq9}
\smallfrac12 \| \nabla u_h(t)\|_{\mathbb R^d}^2 +\smallfrac12 \|\dot
u_h(t)\|_{\mathbb R^d}^2
\end{equation}
is constant over time.
\end{theorem}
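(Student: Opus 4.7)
The plan is to rewrite \eqref{1.eq7}--\eqref{1.eq8} as a transmission problem on $\mathbb R^d\setminus\Gamma$ and then recast that transmission problem as an abstract second-order Cauchy problem $\ddot u_h=A u_h$ for a self-adjoint unbounded operator, to which the framework collected in Section \ref{sec:A} will apply. Reading off the transmission problem: since $u^{\mathrm{free}}$ is a smooth global solution of the wave equation, and the retarded single-layer potential $\mathcal S*\psi_h$ solves the wave equation in $\Omega\cup\Omega^+$ with continuous trace $\mathcal V*\psi_h$ across $\Gamma$ and jump of normal derivative equal to $-\psi_h$, the total field $u_h$ must satisfy $\ddot u_h=\Delta u_h$ in $\Omega\cup\Omega^+$, $\jump{\gamma u_h}=0$, $\jump{\partial_n u_h}=-\psi_h\in X_h$, together with the Galerkin orthogonality $\langle\mu_h,\gamma u_h\rangle_\Gamma=0$ for every $\mu_h\in X_h$, and initial data $u_h(0)=\mathrm E u_0$, $\dot u_h(0)=\mathrm E v_0$ (because $\psi_h\equiv 0$ on $(-\infty,0)$ makes the layer potential vanish at $t=0$).

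Next I would fold all transmission conditions into a single unbounded operator. Set $H:=L^2(\mathbb R^d)$,
\[
V:=\{v\in H^1(\mathbb R^d)\,:\,\langle\mu_h,\gamma v\rangle_\Gamma=0\ \text{for all}\ \mu_h\in X_h\},
\]
\[
D(A):=\{v\in V\,:\,v|_\Omega\in H^2(\Omega),\ v|_{\Omega^+}\in H^2(\Omega^+),\ \jump{\partial_n v}\in X_h\},\qquad Av:=\Delta v\ \text{piecewise}.
\]
A direct integration by parts on $\Omega$ and $\Omega^+$ yields
\[
-(Av,w)_{\mathbb R^d}=(\nabla v,\nabla w)_{\mathbb R^d}+\langle\jump{\partial_n v},\gamma w\rangle_\Gamma\qquad \forall v\in D(A),\ w\in V,
\]
and the boundary term vanishes because $\jump{\partial_n v}\in X_h$ while $\gamma w$ annihilates $X_h$. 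In particular $A$ is symmetric on $D(A)$ and non-positive with $(-Av,v)_{\mathbb R^d}=\|\nabla v\|^2_{\mathbb R^d}$.

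Self-adjointness of $A$ reduces to showing that $I-A$ maps $D(A)$ onto $H$, i.e.\ to the Yukawa transmission problem: given $f\in L^2(\mathbb R^d)$, find $v\in V$ with $v-\Delta v=f$ piecewise and $\jump{\partial_n v}\in X_h$. This is exactly the coercive variational problem $(v,w)_{\mathbb R^d}+(\nabla v,\nabla w)_{\mathbb R^d}=(f,w)_{\mathbb R^d}$ on $V\times V$, solvable by Lax--Milgram; the piecewise $H^2$ regularity and the fact that $\jump{\partial_n v}$ lies in $X_h$ follow from interior elliptic regularity combined with the classical properties of the Yukawa single-layer potential that absorb the surface contribution. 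This Yukawa step is the main obstacle, and it is precisely where the results for the reaction-diffusion single layer announced in Section \ref{sec:2} enter.

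With $A$ self-adjoint and non-positive on $H$, the abstract second-order Cauchy theory of Section \ref{sec:A} applies. The initial data $(\mathrm E u_0,\mathrm E v_0)$ lie in $D(A)\times V$, because $u_0\in H^2_0(\Omega^+)$ and $v_0\in H^1_0(\Omega^+)$ extend by zero with vanishing trace and vanishing normal derivative on $\Gamma$, so both transmission and Galerkin conditions are satisfied trivially. One obtains existence, uniqueness, the regularity \eqref{1.eq9b}, and conservation of $\smallfrac12\|\dot u_h(t)\|^2_{\mathbb R^d}-\smallfrac12(A u_h(t),u_h(t))_{\mathbb R^d}$; by the identity in the previous paragraph this coincides with \eqref{1.eq9}. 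Finally $\psi_h(t)=-\jump{\partial_n u_h(t)}$ inherits continuity in $H^{-1/2}(\Gamma)$ from $u_h\in\mathcal C([0,\infty);D(A))$, giving \eqref{1.eq9c}, and the equivalence with \eqref{1.eq7}--\eqref{1.eq8} is recovered from the jump formulas for $\mathcal S*\psi_h$.
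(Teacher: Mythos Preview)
Your overall strategy --- recast \eqref{1.eq7}--\eqref{1.eq8} as a transmission problem across $\Gamma$, then as an abstract second-order Cauchy problem $\ddot u_h = A u_h$ and invoke Section~\ref{sec:A} --- is exactly the paper's approach. There are, however, two genuine gaps in the execution.

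The main one is that you work on all of $\mathbb R^d$. The framework of Section~\ref{sec:A} requires $V$ to be a Hilbert space whose norm is the one appearing in the Green identity \eqref{A.eq2}, so here $\|v\|_V=\|\nabla v\|_{\mathbb R^d}$, together with a continuous embedding $V\subset H=L^2(\mathbb R^d)$. But $\|\nabla\,\cdot\,\|_{\mathbb R^d}$ is only a seminorm on $H^1(\mathbb R^d)$, and the completion of smooth compactly supported functions in this seminorm is a Beppo--Levi (weighted Sobolev) space that is \emph{not} contained in $L^2(\mathbb R^d)$; in $d=2$ it is not even naturally a space of functions. The paper discusses this obstruction explicitly under ``Waves in free space'' in Section~\ref{sec:A} and in the remark closing Section~\ref{sec:2}. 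It is cured by exploiting finite propagation speed: for fixed $T$, truncate to a ball $\mathbb B=B(\mathbf 0;R+T)$ containing $\mathrm{supp}\,u_h(t)$ for all $t\le T$, take $H=L^2(\mathbb B)$ and $V=\{u\in H^1_0(\mathbb B):\gamma u\in X_h^\circ\}$ with the genuine norm $\|\nabla\,\cdot\,\|_{\mathbb B}$ (Poincar\'e on $H^1_0(\mathbb B)$), and impose an artificial Dirichlet condition on $\partial\mathbb B$. The surjectivity step then uses Yukawa layer potentials on $\Xi=\Gamma\cup\partial\mathbb B$, not on $\Gamma$ alone.

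A second, smaller issue: your domain $D(A)$ asks for piecewise $H^2$ regularity. For merely Lipschitz $\Gamma$ this will in general fail, and your Lax--Milgram argument cannot deliver it. The correct definition --- the one the paper uses --- requires only $\Delta_\pm v\in L^2$ and $\jump{\partial_\nu v}\in X_h$; that is exactly what the variational solution provides (testing against $V$ forces $\jump{\partial_\nu v}$ to annihilate $X_h^\circ$, hence to lie in $X_h$). Finally, a sign: with the paper's convention $\jump{\partial_\nu(\mathcal S*\psi)}=\psi$, one has $\psi_h=\jump{\partial_\nu u_h}$, not $-\psi_h$.
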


Let us emphasize that the exact total wave field $u$ is only defined
in $\Omega^+$ and that it can be extended by zero to the interior of
the obstacle. The semidiscrete total wave field $u_h$ is however
defined in $\mathbb R^d$ and the property of energy conservation is
proved with integration over $\mathbb R^d$ and not only over
$\Omega^+$. This shows that Galerkin discretization leaks part of
the energy to the interior of the obstacle, although the total
energy is still constant.

\section{Proof of Theorem \ref{1.th1}}\label{sec:2}

\paragraph{Notation.}  The jump of the trace across $\Gamma$ is denoted
\[
\jump{\gamma u}:=\gamma^-u-\gamma^+ u.
\]
Weak normal derivatives on $\Gamma$ can be defined using Green's
formula. Given an open bounded set $\mathbb B$ that contains
$\overline\Omega$, we can define $\partial_\nu^\pm u$ for any $u \in
H^1(\mathbb B\setminus\Gamma)$ such that $\Delta u \in L^2(\mathbb
B\setminus\Gamma)$ with the formulas:
\begin{eqnarray*}
\langle\partial_\nu^-u,\gamma^- v\rangle_\Gamma &=& (\nabla u,\nabla
v)_{\Omega^-}+(\Delta u,v)_{\Omega^-}\qquad \forall v \in
H^1(\Omega^-),\\
\langle\partial_\nu^+u,\gamma^+ v\rangle_\Gamma &=& -(\nabla
u,\nabla v)_{\mathbb B\cap \Omega^+}-(\Delta u,v)_{\mathbb
B\cap\Omega^+} \qquad \forall v\in H^1_{\partial\mathbb B}(\mathbb
B\cap \Omega^+),
\end{eqnarray*}
where
\begin{equation}\label{2.eq21}
H^1_{\partial\mathbb B}(\mathbb B\cap \Omega^+)=\{ v \in H^1(\mathbb
B\cap\Omega^+)\,:\, \gamma_{\partial\mathbb B}v=0\}
\end{equation}
and $\gamma_{\partial\mathbb B}$ is the trace operator associated to
the boundary of $\mathbb B$. It is well known that the definition of
the exterior normal derivative is independent of the set $\mathbb
B$. The jump of the normal derivative $\jump{\partial_\nu
u}:=\partial_\nu^-u-\partial_\nu^+ u$ is defined for the same class
of functions.

\paragraph{The single layer Yukawa potential.}
Consider the fundamental solution of the Yukawa operator $u\mapsto
-\Delta u+u$:
\begin{equation}\label{2.eq23}
E(\mathbf x,\mathbf y):= \left\{\begin{array}{ll} \ds
\frac{e^{-|\mathbf x-\mathbf y|}}{4\pi |\mathbf x-\mathbf y|} &
\mbox{(when $d=3$),}\\[1.5ex]
\ds\frac1{2\pi}K_0(|\mathbf x-\mathbf y|)& \mbox{(when $d=2$)},
\end{array}\right.
\end{equation}
where $K_0$ is the modified Bessel function of the second kind and
order zero. On the surface/curve $\Xi:=\partial\mathbb B\cup
\Gamma$, we can define the single layer potential
\begin{equation}\label{2.eq25}
\mathrm S  \lambda := \int_\Xi E(\punto,\mathbf y)\lambda(\mathbf
y)\mathrm d\Xi(\mathbf y).
\end{equation}
Using a weak definition of this potential (see \cite{Cos88} or the
very general theory developed in \cite{McL00}), we can prove that
$\mathrm S :H^{-1/2}(\Xi) \to H^1(\mathbb R^d)$ is bounded,
that
\begin{equation}\label{2.eq24}
\mathrm V :=\gamma_\Xi \mathrm S:H^{-1/2}(\Xi)\to H^{1/2}(\Xi)
\end{equation}
is bounded and coercive
\begin{equation}\label{2.eq5}
\langle \lambda, \mathrm V \lambda\rangle_\Xi \ge C \|
\mu\|_{-1/2,\Xi}^2 \qquad \forall \lambda \in H^{-1/2}(\Xi),
\end{equation}
and that $\Delta (\mathrm S\lambda)=\mathrm S\lambda$ in $\mathbb
R^d\setminus\Xi$ for all $\lambda$. Also, if we write
$\lambda=(\lambda_\Gamma,\lambda_{\partial})\in H^{-1/2}(\Xi)\cong
H^{-1/2}(\Gamma)\times H^{-1/2}(\partial\mathbb B)$, then
$\jump{\partial_\nu (\mathrm S\lambda)}=\lambda_\Gamma$ (the jump is
defined across $\Gamma$).

\paragraph{A transmission problem.} The single layer retarded potential
satisfies the following properties \cite{BamHaD86a}
\begin{equation}\label{2.eq1}
\jump{\gamma(\mathcal S*\psi)}=0 \qquad \jump{\partial_\nu(\mathcal
S*\psi)}=\psi \qquad (\mathcal S*\psi)(0)=0 \qquad
\smallfrac{d}{dt}(\mathcal S*\psi)(0)=0.
\end{equation}
For given initial data with compact support we can choose $R>0$ so
that \eqref{1.eq11} is satisfied. Since the speed of propagation of
$u^{\mathrm{free}}$ and $\mathcal S*\psi_h$ is the same,
\begin{equation}\label{2.eq2}
\mathrm{supp}\, u_h(t) \subset \mathbb B:= B(\mathbf 0;R+T) \qquad
\forall t \in [0,T].
\end{equation}
Noticing that
\begin{equation}\label{2.eq3}
\jump{\gamma u_h}= \jump{\gamma(\mathcal S*\psi_h)}+\jump{\gamma
u^{\mathrm{free}}}=0,
\end{equation}
it follows that, restricted to the time interval $[0,T]$, the
function defined by \eqref{1.eq7}-\eqref{1.eq8} can be understood as
$u_h:[0,T]\to H^1(\mathbb B\setminus\Gamma)$ that solves the wave
propagation problem:
\begin{subequations}\label{2.eq4}
\begin{alignat}{4}
\label{2.eq4a}
\ddot u_h =\Delta_{\pm} u_h, \\
\jump{\gamma u_h}=0, \\ \label{2.eq4d}
\gamma u_h \in X_h^\circ,\\
\label{2.eq4e} \jump{\partial_\nu u_h}\in X_h,\\
\gamma_{\partial\mathbb B}u_h=0,
\end{alignat}
\end{subequations}
with initial conditions
\begin{equation}\label{2.eq4comp}
u_h(0)=\mathrm Eu_0 \qquad \dot u_h(0)=\mathrm E v_0.
\end{equation}
The set $X_h^\circ$ in \eqref{2.eq4d} is the polar set of $X_h$,
i.e.,
\[
X_h^\circ:=\{ \xi \in H^{1/2}(\Gamma)\,:\,
\langle\mu_h,\xi\rangle_\Gamma =0 \quad \forall \mu_h \in X_h\}.
\]
The Laplace operator $\Delta_\pm$ in \eqref{2.eq4a} is the one
defined in the sense of distributions in $\mathbb B\setminus\Gamma$.
Finally, the transmission condition \eqref{2.eq4e} is equivalent to
\[
\langle\jump{\partial_\nu u_h},\xi_h \rangle_\Gamma =0 \qquad
\forall \xi_h \in X_h^\circ.
\]
Let now $u_h$ be a solution of \eqref{2.eq4}-\eqref{2.eq4comp} and
define $\psi_h:=\jump{\partial_\nu u_h}=\jump{\partial_\nu
(u_h-u^{\mathrm{free}})}$ (recall \eqref{1.eq3b}). We can then show
that $\mathcal S*\psi_h=u_h-u^{\mathrm{free}}$ by comparing the
transmission problems that both solutions satisfy.

\paragraph{Formulation as a Cauchy problem.} Consider the Hilbert
spaces
\begin{eqnarray*}
H &:=& L^2(\mathbb B),\\
V &:=& \{ u \in H^1_0(\mathbb B)\,:\,\gamma u \in X_h^\circ\},\\
D(A) &:=& \{ u \in V\,:\, \Delta_\pm u\in L^2(\mathbb B), \quad
\jump{\partial_\nu u}\in X_h\},
\end{eqnarray*}
endowed with the respective norms
\[
\| u\|_H:= \| u\|_{\mathbb B} \qquad \| u\|_V:= \| \nabla
u\|_{\mathbb B}, \qquad \| u\|_{D(A)}:=\left(\| \nabla u\|_{\mathbb
B}^2+\|\Delta_\pm u\|_{\mathbb B}^2\right)^{1/2}.
\]
We also consider the operator $A:=\Delta_\pm$. We next verify the
two conditions of Section \ref{sec:A}. First of all, the generalized
Green's Identity: for $u \in D(A)$, $v \in V\subset H^1_0(\mathbb
B)$, using the weak definition of the normal derivatives, it follows
that
\[
(\nabla u,\nabla v)_{\mathbb B}+(\Delta_\pm u,v)_{\mathbb B}=\langle
\jump{\partial_\nu u},\gamma v\rangle_\Gamma =0,
\]
because $\jump{\partial_\nu u}\in X_h$ and $\gamma v \in X_h^\circ$.
The second step is surjectivity: for any $f \in L^2(\mathbb B)$ we
want to find
\begin{equation}\label{2.eq7}
u \in D(A), \qquad -\Delta u+u=f \quad \mbox{in $\mathbb
B\setminus\Gamma$}.
\end{equation}
We first choose $u^{\mathrm{nh}}\in H^1_0(\mathbb B)$ such that
$-\Delta u^{\mathrm{nh}}+ u^{\mathrm{nh}}=f$ in $\mathbb B$, and
note that $u^{\mathrm{nh}}\in H^2(\mathbb B)$ by a simple regularity
argument. We next consider a variational problem in the space
$\underline X_h:=X_h \times H^{-1/2}(\partial\mathbb B) \subset
H^{-1/2}(\Xi)$:
\begin{equation}\label{2.eq6}
\left[\begin{array}{l} \lambda=(\lambda_h,\lambda_{\partial})\in
\underline X_h,\\[1.5ex]
\langle \rho,\mathrm V \lambda\rangle_{\Xi}=-\langle \rho_h,\gamma
u^{\mathrm{nh}}\rangle_\Gamma \qquad \forall
\rho=(\rho_h,\rho_\partial) \in \underline X_h.
\end{array}
\right.
\end{equation}
This problem is uniquely solvable by the coercivity property
\eqref{2.eq5}. We finally take $u:= u^{\mathrm{nh}}+\mathrm
S\lambda.$ It is clear that $u\in H^1(\mathbb B)$. Testing
\eqref{2.eq6} with elements $(0,\rho_\partial)\in \{ 0\}\times
H^{-1/2}(\partial\mathbb B)$ and recalling that $\gamma_\Xi \mathrm
S =\mathrm V$ it follows that $u \in H^1_0(\mathbb B)$. Testing with
elements $(\rho_h,0)\in X_h \times \{0\}$ it follows that $\gamma u
\in X_h^\circ$. This proves that $u \in V$. Also, $-\Delta u+u=f$ in
$\mathbb B\setminus\Gamma$ and $\jump{\partial_\nu
u}=\jump{\partial_\nu \mathrm S \mu}=\lambda_h \in X_h$. Therefore
$u \in D(A)$ and \eqref{2.eq7} is satisfied.

\paragraph{Conclusions.} The theory for abstract Cauchy problems
that is sketched in Section \ref{sec:A} can be applied to problem
\eqref{2.eq4}. When initial data are in $D(A)\times V$, the Cauchy
problem has a unique strong $\mathcal C^2$ solution. Since $u_0\in
H^2(\Omega^+)$ and $v_0\in H^1(\Omega^+)$ vanish in a neighborhood
of $\overline\Omega$ and also in a neighborhood of $\partial\mathbb
B$, then $(\mathrm E u_0,\mathrm E v_0)$ satisfies the regularity
requirements to have strong solutions:
\[
u_h \in \mathcal C^2([0,T];L^2(\mathbb B))\cap \mathcal
C^1([0,T];H^1_0(\mathbb B))\cap \mathcal C([0,T]; D(A)).
\]
Extending by zero to the exterior of $\mathbb B$, it is clear that
\eqref{1.eq9} is satisfied. Note that the property is proved in
$[0,T]$ for any $T$ and that $T$ influences the size of the ball
$\mathbb B$ that we use to truncate the domain. Note also that
$\jump{\partial_\nu\punto}:D(A) \to H^{-1/2}(\Gamma)$ is bounded.
Then $\psi_h:=\jump{\partial_\nu u_h}\in \mathcal C([0,\infty);
H^{-1/2}(\Gamma))$.

\begin{remark}\rm The convenience of adding the cut-off boundary
$\partial\mathbb B$ --far enough from the obstacle so that the
solution is not affected in the time interval $[0,T]$-- is due to
the difficulty of working with energy spaces in unbounded domains
(see Section \ref{sec:A}).
\end{remark}

\section{Transparent boundary conditions}\label{sec:3}

We now consider the problem of propagation of compactly supported
initial conditions in free space $\mathbb R^d$
\[
c^{-2} \ddot u = \nabla \cdot (\kappa\nabla u)
\]
with initial conditions
\[
u(0)=u_0 ,\qquad \dot u(0)=v_0.
\]
We assume that $\kappa, c \in L^\infty(\mathbb R^d)$ are positive
and that $\kappa^{-1}, c^{-1}\in L^\infty(\mathbb R^d)$.
Furthermore, we assume that $c\equiv 1 $ and $\kappa\equiv 1$
outside a compact set. Let
\[
\mathcal O:=\mathrm{supp}\,u_0 \cup \mathrm{supp}\,v_0\cup
\mathrm{supp}\,(c-1) \cup \mathrm{supp}\,(\kappa-1),
\]
which is a compact set by all the previous hypotheses on initial
data and coefficients. We choose a bounded open set $\Omega$ with
Lipschitz boundary $\Gamma:=\partial\Omega$, with connected exterior
$\Omega^+:=\mathbb R^d\setminus\Gamma$ and such that $\mathcal
O\subset \overline\Omega$. We admit the possibility of taking
$\overline\Omega=\mathcal O$, as long as $\mathcal O$ meets the
regularity hypotheses required for $\Omega$. Also, there is no need
to have $\Omega$ connected, although for simplicity we will assume
that its exterior is connected.

Denoting the conormal derivative on $\Gamma$ by $\partial_\nu^\kappa
u=(\kappa\nabla u)\cdot\boldsymbol\nu$, and renaming
$u^{\mathrm{ext}}=u|_{\Omega^+}$, we have the problem
\begin{subequations}\label{3.eq1}
\begin{alignat}{4}
\label{3.eq1a}
c^{-2} \ddot u = \nabla \cdot (\kappa\nabla u) & &
\qquad & \mbox{in
$\Omega$ },\\
\ddot u^{\mathrm{ext}}=\Delta u^{\mathrm{ext}} & & & \mbox{in $\Omega^+$,}\\
\label{3.eq1c}
\gamma u=\gamma^+ u^{\mathrm{ext}} & & & \mbox{on $\Gamma$,}\\
\label{3.eq1d}
\partial_\nu^\kappa u=\partial_\nu u^{\mathrm{ext}}& & & \mbox{on
$\Gamma$,},
\end{alignat}
\end{subequations}
with initial conditions
\begin{equation}\label{3.eq1comp}
u(0)=u_0, \quad \dot u(0)=v_0, \quad u^{\mathrm{ext}}(0)=0, \quad
\dot u^{\mathrm{ext}}(0)=0.
\end{equation}
The function $u^{\mathrm{ext}}$ can be represented with Kirchhoff's
formula
\begin{equation}\label{3.eq2}
u^{\mathrm{ext}}=\mathcal D*\varphi -\mathcal S*\lambda \qquad
(\varphi,\lambda):=(\gamma^+
u^{\mathrm{ext}},\partial_\nu^+u^{\mathrm{ext}}),
\end{equation}
using both Cauchy data on the interface $\Gamma$. This formula
employs the double layer retarded potential (see \cite{BamHaD86b},
\cite{HaD03})
\[
(\mathcal D*\varphi) (\mathbf x,t) = \left\{\begin{array}{ll}\ds
\int_\Gamma \nabla_{\mathbf y}\left(\frac{\varphi(\mathbf
z,t-|\mathbf x-\mathbf y|)}{4\pi|\mathbf x-\mathbf
y|}\right)\Big|_{\mathbf z=\mathbf y}\cdot\boldsymbol\nu(\mathbf
y)\mathrm d\Gamma(\mathbf y) & \mbox{(when
$d=3$)}\\[2.5ex]
\ds \int_\Gamma \int_0^{t-|\mathbf x-\mathbf y|}
\hspace{-15pt}\frac{\varphi(\mathbf y,\tau)}{|\mathbf x-\mathbf
y|^2-(t-\tau)^2}\Phi(\mathbf x,\mathbf y,t-\tau)\mathrm
d\Gamma(\mathbf
y)\mathrm d\tau\\
\ds \hspace{1cm}+\int_\Gamma \frac{\varphi(\mathbf y,t-|\mathbf
x-\mathbf y|)}{|\mathbf x-\mathbf y|}\Phi(\mathbf x,\mathbf
y,t-\tau)\mathrm d\Gamma(\mathbf y) &  \mbox{(when $d=2$)},
\end{array}\right.
\]
where
\[
\Phi(\mathbf x,\mathbf y,t)=\frac{(\mathbf x-\mathbf
y)\cdot\boldsymbol\nu(\mathbf y)}{\sqrt{t^2-|\mathbf x-\mathbf
y|^2}}
\]
and $\boldsymbol\nu(\mathbf y)$ is the outwards pointing normal
vector at $\mathbf y$. Three retarded integral operators appear in
this formulation:
\begin{equation}\label{3.eq3}
\mathcal K^t*\lambda:=
\smallfrac12(\partial_\nu^++\partial_\nu^-)(\mathcal S*\lambda)
\qquad \mathcal K*\varphi=\smallfrac12(\gamma^++\gamma^-)(\mathcal
D*\varphi) \qquad \mathcal W*\varphi=-\partial_\nu^\pm(\mathcal
D*\varphi).
\end{equation}
The boundary-field formulation is based on representing
$u^{\mathrm{ext}}$ with the formula \eqref{3.eq2}. We then write a
weak formulation for the interior equation \eqref{3.eq1a} and
substitute the transmission condition \eqref{3.eq1d}:
\begin{equation}\label{3.eq4}
(c^{-2} \ddot u,v)_\Omega+(\kappa\nabla u,\nabla
v)_\Omega-\langle\lambda,\gamma v\rangle_\Gamma =0 \qquad \forall v
\in H^1(\Omega).
\end{equation}
A second equation is obtained by imposing the transmission condition
\eqref{3.eq1c}, with $u^{\mathrm{ext}}$ written in terms of its
Cauchy data \eqref{3.eq2} and using the conditions \eqref{3.eq3} to
represent the trace of $\mathcal D*\varphi$:
\begin{equation}\label{3.eq5}
\gamma u+\mathcal V*\lambda-(\smallfrac12\varphi+\mathcal
K*\varphi)=0.
\end{equation}
The third equation is an identity satisfied by the Cauchy data:
\begin{equation}\label{3.eq6}
\smallfrac12\lambda+\mathcal K^t*\lambda+\mathcal W*\varphi=0.
\end{equation}
Finally, the global formulation results from writing
\eqref{3.eq4}-\eqref{3.eq5}-\eqref{3.eq6}. The unknowns are
\[
u:[0,\infty) \to H^1(\Omega), \qquad \lambda:\mathbb R \to
H^{-1/2}(\Gamma), \qquad \varphi:\mathbb R \to H^{1/2}(\Gamma),
\]
with initial conditions
\[
u(0)=u_0, \qquad \dot u(0)=v_0, \qquad \lambda\equiv 0 \mbox{ in
$(-\infty,0)$}, \qquad \varphi\equiv 0\mbox{ in $(-\infty,0)$}.
\]
We now choose spaces
\[
V_h \subset H^1(\Omega) \qquad X_h \subset H^{-1/2}(\Gamma) \qquad
Y_h \subset H^{1/2}(\Gamma),
\]
and look for
\[
u_h:[0,\infty)\to V_h, \qquad \lambda_h:\mathbb R \to X_h, \qquad
\varphi_h:\mathbb R\to Y_h,
\]
satisfying initial conditions
\begin{equation}\label{3.eq7}
u_h(0)=u_{h,0}, \qquad \dot u_h(0)=v_{h,0}, \qquad \lambda_h\equiv 0
\mbox{ in $(-\infty,0)$}, \qquad \varphi_h\equiv 0\mbox{ in
$(-\infty,0)$},
\end{equation}
for approximations $u_0\approx u_{h,0}\in V_h$ and $v_0\approx
v_{h,0}\in V_h$ to be determined by a projection method or by some
kind of interpolation process (this is not relevant in the sequel).
Finally, we have the set of Galerkin equations: for $t> 0$
\begin{subequations}\label{3.eq8}
\begin{alignat}{4}
\label{3.eq8a} (c^{-2} \ddot u_h,v_h)_\Omega+(\kappa\nabla
u_h,\nabla v_h)_\Omega-\langle\lambda_h,\gamma v_h\rangle_\Gamma =0
& & \qquad&
\forall v_h \in V_h ,\\
\label{3.eq8b} \langle\mu_h,\gamma u_h\rangle_\Gamma +
\langle\mu_h,\mathcal
V*\lambda_h\rangle_\Gamma-\langle\mu_h,\smallfrac12\varphi_h+\mathcal
K*\varphi_h\rangle_\Gamma=0 & & & \forall \mu_h \in X_h,\\
\label{3.eq8c} \langle\smallfrac12\lambda_h+\mathcal
K^t*\lambda_h,\xi_h\rangle_\Gamma +\langle \mathcal
W*\varphi_h,\xi_h\rangle_\Gamma=0 & & & \forall\xi_h \in Y_h.
\end{alignat}
\end{subequations}
Equations \eqref{3.eq8} form a system of linear second order
differential equations coupled with the kind of retarded equations
that we found in Section \ref{sec:2} (see \eqref{1.eq20} for
instance). These equations are complemented with the initial
conditions \eqref{3.eq7}. Once \eqref{3.eq7}-\eqref{3.eq8} has been
solved (its solvability is part of what we state in the next
theorem), we can define the approximation to the exterior solution
\begin{equation}\label{3.eq9}
u_h^{\mathrm{ext}}:= \mathcal D*\varphi_h-\mathcal S*\lambda_h.
\end{equation}
Note that this function is defined in $\mathbb R^d$ and not only in
$\Omega^+$. In order to simplify some of the forthcoming arguments,
we will assume that constant functions belong to the three discrete
spaces
\[
\mathbb P_0(\Omega)\subset V_h \qquad \mathbb P_0(\Gamma) \subset
X_h \qquad \mathbb P_0(\Gamma)\subset Y_h.
\]
When $\Gamma$ and $\Omega$ are not connected, the spaces $\mathbb
P_0(\Gamma)$ and $\mathbb P_0(\Omega)$ have to be understood as the
spaces of constant functions on each connected component of the
corresponding domain.

The following result (which will be proved in Section \ref{sec:4})
gives a basic regularity estimate for the solution of this problem.
It also states an energy conservation property, where in addition to
the expected wave fields ($u_h$ in the interior domain $\Omega$ and
$u_h^{\mathrm{ext}}$ in the exterior domain $\Omega^+$), we have to
count the energy of $u_h^{\mathrm{ext}}$ in the interior of
$\Gamma$, computed with the material properties of the surrounding
medium.

\begin{theorem}\label{3.th1}
The semidiscrete total fields $(u_h,u_h^{\mathrm{ext}})$ and the
approximations to the Cauchy data on the interface
$(\varphi_h,\lambda_h)$ given by
\eqref{3.eq7}-\eqref{3.eq8}-\eqref{3.eq9} satisfy
\begin{eqnarray}
u_h & \in & \mathcal C^2([0,\infty); L^2(\Omega))\cap \mathcal
C^1([0,\infty);H^1(\Omega)),\\
u_h^{\mathrm{ext}} & \in & \mathcal C^2([0,\infty);L^2(\mathbb R^d)
\cap \mathcal C^1([0,\infty); H^1(\mathbb R^d\setminus\Gamma)),\\
\lambda_h &\in & \mathcal C([0,\infty);H^{-1/2}(\Gamma)),\\
\varphi_h &\in & \mathcal C^1([0,\infty);H^{1/2}(\Gamma)).
\end{eqnarray}
Moreover, the energy
\begin{equation}
\smallfrac12 \|\kappa^{1/2}\nabla u_h(t)\|_\Omega^2+\smallfrac12 \|
\nabla u_h^{\mathrm{ext}}(t)\|_{\mathbb R^d\setminus\Gamma}^2+
\smallfrac12\| c^{-1} \dot u_h(t)\|_\Omega^2+\smallfrac12\|\dot
u_h^{\mathrm{ext}}(t)\|_{\mathbb R^d}
\end{equation}
is constant over time.
\end{theorem}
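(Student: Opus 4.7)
The plan is to mirror the strategy of Section \ref{sec:2}: recast \eqref{3.eq7}--\eqref{3.eq8}--\eqref{3.eq9} as an abstract second-order Cauchy problem on a Hilbert triple $(H,V,D(A))$, verify the two structural hypotheses of Section \ref{sec:A} (a generalised Green's identity and surjectivity of $I-A$), and read off both the regularity and the energy conservation from the abstract theorem. The first step is to recognise that $u_h^{\mathrm{ext}}=\mathcal D*\varphi_h-\mathcal S*\lambda_h$ is an acoustic wave in $\mathbb R^d\setminus\Gamma$ whose support in $[0,T]$ lies in a ball $\mathbb B$ large enough that $u_h^{\mathrm{ext}}$ vanishes near $\partial\mathbb B$, and whose jumps across $\Gamma$ recover the densities: $\jump{\gamma u_h^{\mathrm{ext}}}=\varphi_h$ and $\jump{\partial_\nu u_h^{\mathrm{ext}}}=-\lambda_h$. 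Combining \eqref{3.eq3} with the standard trace identities for $\mathcal S$ and $\mathcal D$, equations \eqref{3.eq8b}--\eqref{3.eq8c} become the polarity conditions $\gamma u_h-\gamma^+ u_h^{\mathrm{ext}}\in X_h^\circ$ and $\partial_\nu^- u_h^{\mathrm{ext}}\in Y_h^\circ$, while \eqref{3.eq8a} says that $c^{-2}\ddot u_h-\nabla\cdot(\kappa\nabla u_h)=0$ weakly against $V_h$ together with the Neumann-type matching $\partial_\nu^\kappa u_h+\jump{\partial_\nu u_h^{\mathrm{ext}}}\in(\gamma V_h)^\circ$. The result is a transmission problem coupling the interior wave equation in $\Omega$ (with coefficients $c,\kappa$) to the exterior wave equation in $\mathbb B\setminus\Gamma$ (with unit coefficients, now including a ghost wave inside $\Omega$) through three non-standard discrete transmission conditions on $\Gamma$.

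I would then set $H:=V_h\times L^2(\mathbb B)$ with inner product $m(u,v)+(w,z)_{\mathbb B}$, where $m(u,v):=(c^{-2}u,v)_\Omega$, and
\[
V:=\{(u,w)\in V_h\times H^1_0(\mathbb B\setminus\Gamma):\ \jump{\gamma w}\in Y_h,\ \gamma u-\gamma^+ w\in X_h^\circ\},
\]
equipped with $(\kappa\nabla u,\nabla v)_\Omega+(\nabla w,\nabla z)_{\mathbb B\setminus\Gamma}$, which is equivalent to the natural norm by Poincar\'e in $\mathbb B$ and positivity of $\kappa$. Define $D(A)$ as the subspace of $V$ for which $\Delta_\pm w\in L^2(\mathbb B\setminus\Gamma)$, $\jump{\partial_\nu w}\in X_h$, $\partial_\nu^- w\in Y_h^\circ$, and $\partial_\nu^\kappa u+\jump{\partial_\nu w}\in(\gamma V_h)^\circ$, with $A$ chosen so that $(A(u,w),(v,z))_H=-(\kappa\nabla u,\nabla v)_\Omega-(\nabla w,\nabla z)_{\mathbb B\setminus\Gamma}$ for $(v,z)\in V$. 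The generalised Green's identity is a direct integration by parts in $\Omega$ and $\mathbb B\setminus\Gamma$: the boundary contributions on $\Gamma$ regroup into three duality pairings in $H^{\pm1/2}(\Gamma)$ that are killed exactly by the three polarity conditions defining $V$ and $D(A)$, and the boundary term on $\partial\mathbb B$ vanishes by the Dirichlet condition on $w$. The hypothesis that $\mathbb P_0$ belongs to all three discrete spaces is used precisely to handle the constant-mode compatibility in these dualities.

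The main obstacle is surjectivity of $I-A$: given $(\tilde f,g)\in H$, find $(u,w)\in D(A)$ solving the corresponding elliptic system. Following the template of Section \ref{sec:2}, I would proceed in three stages. First, solve an auxiliary Dirichlet Yukawa problem on $\mathbb B$ providing $w^{\mathrm{nh}}\in H^2(\mathbb B)\cap H^1_0(\mathbb B)$ with $-\Delta w^{\mathrm{nh}}+w^{\mathrm{nh}}=g$. Second, add a Yukawa layer correction $\mathrm S\underline\lambda+\mathrm D\underline\varphi$ on $\Xi=\Gamma\cup\partial\mathbb B$ with densities $\underline\lambda\in X_h\times H^{-1/2}(\partial\mathbb B)$ and $\underline\varphi\in Y_h\times\{0\}$, together with an interior FEM correction $u\in V_h$, the three pieces determined simultaneously as the solution of a single coupled variational problem whose left-hand side combines the interior form $m+a$ on $V_h$ with the full Yukawa Calder\'on matrix $(\mathrm V,\mathrm K,\mathrm K^t,\mathrm W)$ on $\Xi$ through traces on $\Gamma$. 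Coercivity of this joint form on the constrained discrete space follows from positivity of the Yukawa Calder\'on projector (a block generalisation of \eqref{2.eq5}) together with positivity of $\kappa$ and $c^{-2}$, so Lax--Milgram delivers a unique solution. Third, verify via the jump identities for $\mathrm S$ and $\mathrm D$ that the resulting pair lies in $D(A)$ and solves the prescribed equations.

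Once surjectivity is proven, the abstract theorem of Section \ref{sec:A} applies: provided $((u_{h,0},0),(v_{h,0},0))\in D(A)\times V$, which holds because the compact support of $u_0,v_0$ is inherited by the projections $u_{h,0},v_{h,0}$ so that all polarity conditions at $\Gamma$ and $\partial\mathbb B$ are trivially met, the Cauchy problem has a unique strong solution $(u_h,u_h^{\mathrm{ext}})\in\mathcal C^2([0,T];H)\cap\mathcal C^1([0,T];V)\cap\mathcal C([0,T];D(A))$. The stated regularities for $u_h$ and $u_h^{\mathrm{ext}}$ follow after extending by zero outside $\mathbb B$, and the regularity of $\lambda_h=-\jump{\partial_\nu u_h^{\mathrm{ext}}}$ and $\varphi_h=\jump{\gamma u_h^{\mathrm{ext}}}$ follows from continuity of the corresponding jump maps on $D(A)$ and $V$. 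Finally, differentiating the energy functional and using the Green's identity with $(\dot u_h,\dot u_h^{\mathrm{ext}})\in V$ paired against normal-derivative data in $D(A)$, the three polarity conditions annihilate every boundary pairing on $\Gamma$, so the energy is constant on $[0,T]$; since $T$ is arbitrary the conservation is global.
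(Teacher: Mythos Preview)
Your overall architecture matches the paper's: cut off to a ball $\mathbb B$, rewrite \eqref{3.eq8} as a transmission problem for $(u_h,u_h^{\mathrm{ext}})$, set up $H,V,D(A)$, verify a Green identity, and prove surjectivity of $I-A$ via a coupled Yukawa Calder\'on system. The surjectivity step you sketch is essentially the paper's \eqref{4.eq10}. However, there is one genuine gap.

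\medskip
\textbf{The form on $V$ is not a norm.} You claim that
\[
[(u,w),(u,w)]_V:=(\kappa\nabla u,\nabla u)_\Omega+(\nabla w,\nabla w)_{\mathbb B\setminus\Gamma}
\]
is equivalent to the natural norm on $V$ ``by Poincar\'e in $\mathbb B$.'' This is false: the pair $(0,\chi_\Omega)$ lies in $V$ (since $\gamma_{\partial\mathbb B}\chi_\Omega=0$, $\jump{\gamma\chi_\Omega}=1\in\mathbb P_0(\Gamma)\subset Y_h$, and $\gamma u_h-\gamma^+\chi_\Omega=0\in X_h^\circ$) but has $[(0,\chi_\Omega),(0,\chi_\Omega)]_V=0$. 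Poincar\'e does not apply because $w$ is allowed to jump across $\Gamma$, so the ghost field inside $\Omega$ is decoupled from the Dirichlet condition on $\partial\mathbb B$. The paper handles this by invoking the \emph{rigid-motions} variant of the abstract framework in Section~\ref{sec:A}: one sets $M:=\{0\}\times\mathrm{span}\{\chi_\Omega\}\subset\ker A$, works with the seminorm $|\cdot|_V$ above, and augments it to a genuine norm by adding $\big|\tfrac{1}{|\Omega|}\int_\Omega u^\star\big|^2$. The hypotheses $\mathbb P_0(\Gamma)\subset X_h$ and $\mathbb P_0(\Gamma)\subset Y_h$ are used exactly here (to ensure $\chi_\Omega\in V$ and to kill the constant mode of $u_h$ via the coupling condition), not in the Green identity as you suggest. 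Without this modification the abstract theorem of Section~\ref{sec:A} cannot be invoked as stated.

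\medskip
\textbf{Minor points.} (i) With the paper's convention $\jump{\cdot}=\gamma^--\gamma^+$ and $u_h^{\mathrm{ext}}=\mathcal D*\varphi_h-\mathcal S*\lambda_h$, one has $\jump{\gamma u_h^{\mathrm{ext}}}=-\varphi_h$, not $+\varphi_h$; this is harmless for the membership conditions but matters when you recover $\varphi_h$ at the end. (ii) Your extra domain condition $\partial_\nu^\kappa u+\jump{\partial_\nu w}\in(\gamma V_h)^\circ$ is ill-posed as written, since for a generic $u\in V_h\subset H^1(\Omega)$ there is no element $\partial_\nu^\kappa u\in H^{-1/2}(\Gamma)$. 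The paper avoids this by \emph{defining} the first component of $A$ through the discrete operators $\Delta_h^\kappa$ and $\gamma_h^t$ of \eqref{4.eq7}--\eqref{4.eq8}, so that no Neumann condition on $u_h$ enters $D(A)$. (iii) Your justification that $(u_{h,0},0)\in D(A)$ because ``compact support of $u_0,v_0$ is inherited by the projections'' is not correct in general: projection onto $V_h$ does not preserve support.
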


\section{Proof of Theorem \ref{3.th1}}\label{sec:4}

\paragraph{Introduction of a cut-off boundary.} We will prove the
theorem for an arbitrary interval $[0,T]$. Since
$u_h^{\mathrm{ext}}$ is defined with retarded potentials whose
densities are causal functions (see the initial conditions
\eqref{3.eq7}), we can pick a sufficiently large radius $R>0$ so
that
\begin{equation}\label{4.eq1}
\mathrm{supp}\,u_h^{\mathrm{ext}}(t) \subset \mathbb B:=B(\mathbf 0;
R) \qquad \forall t\in [0,T].
\end{equation}

\paragraph{A transmission problem.} The first step towards the proof
consists of writing \eqref{3.eq8} in terms of the fields $(u_h,
u_h^{\mathrm{ext}})$. The field $u_h^{\mathrm{ext}}$  satisfies the
wave equation in $\mathbb R^d\setminus\Gamma$. The interior field
$u_h$ does not satisfy a differential equation though. Note that:
\begin{equation}\label{4.eq2}
\jump{\gamma u_h^{\mathrm{ext}}}= -\varphi_h \qquad
\jump{\partial_\nu u_h^{\mathrm{ext}}}=-\lambda_h
\end{equation}
The transmission problem looks for
\begin{equation}\label{4.eq3}
u_h:[0,\infty) \to V_h, \qquad u_h^{\mathrm{ext}}:[0,\infty) \to
H^1(\mathbb B\setminus\Gamma)
\end{equation}
satisfying initial conditions
\begin{equation}\label{4.eq4}
u_h(0)=u_{h,0},\qquad \dot u_h(0)=v_{h,0},\qquad
u_h^{\mathrm{ext}}(0)=0, \qquad \dot u_h^{\mathrm{ext}}(0)=0,
\end{equation}
and the equations for all $t>0$
\begin{subequations}\label{4.eq5}
\begin{alignat}{4}\label{4.eq5a}
(c^{-2}\ddot u_h,v_h)_\Omega+(\kappa\nabla u_h,\nabla v_h)+
\langle\jump{\partial_\nu u_h^{\mathrm{ext}}},\gamma
v_h\rangle_\Gamma =0 & & \qquad & \forall v_h \in V_h,\\
\label{4.eq5c}
\ddot u_h^{\mathrm{ext}}=\Delta_\pm u_h^{\mathrm{ext}}, & & & \\
\gamma_{\partial\mathbb B} u_h^{\mathrm{ext}}=0, & & &\\
\label{4.eq5d} \jump{\gamma u_h^{\mathrm{ext}}}\in Y_h, \qquad
\gamma u_h-\gamma^+ u_h^{\mathrm{ext}}\in X_h^\circ,\\
\label{4.eq5e} \jump{\partial_\nu u_h^{\mathrm{ext}}}\in X_h ,\qquad
\partial_\nu^- u_h^{\mathrm{ext}}\in Y_h.
\end{alignat}
\end{subequations}
Equation \eqref{4.eq5a} corresponds to \eqref{3.eq8a} after
substituting $\lambda_h=-\jump{\partial_\nu u_h^{\mathrm{ext}}}$
(see \eqref{4.eq2}. The exterior boundary condition \eqref{4.eq5c}
is a consequence of \eqref{4.eq1}. The first condition in both
\eqref{4.eq5d} and \eqref{4.eq5e} is a consequence of \eqref{4.eq2}.
The second condition in \eqref{4.eq5d} is just \eqref{3.eq8b}.
Finally, the second condition in \eqref{4.eq5e} is \eqref{3.eq8c}.

\paragraph{Formulation as a Cauchy problem.} Consider the space
\[
H^1_{\partial\mathbb B}(\mathbb B\setminus\Gamma):=\{ u\in
H^1(\mathbb B\setminus\Gamma)\,:\, \gamma_{\partial\mathbb B}u=0\}
\cong H^1(\Omega)\times H^1_{\partial\mathbb B}(\mathbb
B\cap\Omega^+)
\]
(recall \eqref{2.eq21}). The three required spaces to fit in the
frame of Section \ref{sec:A} are:
\begin{eqnarray}
H &:=& V_h \times L^2(\mathbb B)\\
V &:=& \{ (u_h,u^\star)\in V_h \times H^1_{\partial\mathbb
B}(\mathbb B\setminus\Gamma)\,:\, \jump{\gamma u^\star}\in Y_h \quad
\gamma u_h-\gamma^+ u^\star \in
X_h^\circ\}\\
D(A)&:=& \{ (u_h, u^\star)\in V\,:\,\Delta_\pm u^\star \in
L^2(\mathbb B)\, \quad \jump{\partial_\nu u^\star}\in X_h, \quad
\partial_\nu u^\star \in Y_h^\circ\}.
\end{eqnarray}
The norm of $H$ is
\[
\| (u_h,u^\star)\|_H^2:=\| c^{-1} u_h\|_\Omega^2+\|
u^\star\|_{\mathbb B}^2.
\]
The following norm
\[
H^1(\Omega)\times H^1_{\partial\mathbb B}(\mathbb
B\setminus\Gamma)\ni (u,u^\star)\mapsto \| \kappa^{1/2} \nabla
u\|_\Omega^2+\|\nabla u^\star\|_{\mathbb B\setminus\Gamma}^2 +
\left|\int_\Gamma (\gamma u-\gamma^+ u)\right|^2 +
\left|\frac1{|\Omega|}\int_\Omega u^\star\right|^2.
\]
can be easily shown to be equivalent to the usual Sobolev norm in
this space. This allows us to write a norm in $V$:
\[
\|(u_h,u^\star)\|_V^2:=\| \kappa^{1/2} \nabla
u_h\|_\Omega^2+\|\nabla u^\star\|_{\mathbb B\setminus\Gamma}^2 +
\left|\frac1{|\Omega|}\int_\Omega u^\star\right|^2,
\]
since we have assumed that $\mathbb P_0(\Gamma)\subset X_h$. For
$D(A)$ we define the norm
\[
\|(u_h,u^\star)\|_{D(A)}^2 := \|(u_h,u^\star)\|_V^2+\|\Delta_\pm
u^\star\|_{\mathbb B\setminus\Gamma}^2.
\]
To define the operator $A:D(A)\to H$ associated to the evolution
problem \eqref{4.eq5}, we introduce the operators $\Delta_h^\kappa :
H^1(\Omega) \to V_h$ and $\gamma_h^t:H^{-1/2}(\Gamma)\to V_h$,
defined by the discrete equations
\begin{equation}\label{4.eq7}
(c^{-2}\Delta_h^\kappa u,v_h)_\Omega=-(\kappa\nabla u,\nabla
v_h)_\Omega \qquad \forall v_h \in V_h
\end{equation}
and
\begin{equation}\label{4.eq8}
(c^{-2}\gamma_h^t \lambda,v_h)_\Omega =\langle\lambda,\gamma
v_h\rangle_\Gamma \qquad \forall v_h \in V_h,
\end{equation}
respectively. These operators are defined so that \eqref{4.eq5a} can
be rewritten as $\ddot u_h= \Delta_h^\kappa u_h
-\gamma_h^t\jump{\partial_\nu u_h^{\mathrm{ext}}}$. The operator
$A:D(A)\to H$ is then defined by
\begin{equation}\label{4.eq21}
A(u_h,u^\star):= (\Delta_h^\kappa u_h-\gamma_h^t\jump{\partial_\nu
u^\star}, \Delta_\pm u^\star).
\end{equation}
Problem \eqref{4.eq5} has the general form \eqref{A.eq4}
 with initial data $(u_{h,0},0) \in D(A)$ and
$(v_{h,0},0)\in V$.

\paragraph{Rigid motions of the system.} Let $M:=\{ 0\}\times \mathrm{span}\{
\chi_\Omega\}$. Noting that
\[
\jump{\partial_\nu\chi_\Omega}=0,
\qquad\jump{\gamma\chi_\Omega}=1\in Y_h, \qquad
\gamma_{\partial\mathbb B}\chi_\Omega=0, \qquad
\partial_\nu^-\chi_\Omega=0, \quad\mbox{and}\quad
\gamma^+\chi_\Omega=0,
\]
it is simple to check that $M\subset D(A)$ and $M\subset \ker(A)$.
We consider the following seminorm in $V$
\[
|(u_h,u^\star)|_V^2:=\| \kappa^{1/2} \nabla u_h\|_\Omega^2+\|\nabla
u^\star\|_{\mathbb B\setminus\Gamma}^2,
\]
associated to a semi-inner product
$[(u_h,u^\star),(v_h,v^\star)]_V$. The hypotheses to consider the
finite dimensional space $M$ as a space of rigid motions of the
evolution problem \eqref{A.eq4} (see Section \ref{sec:A}) are then
easily verified.

\paragraph{Verification of the associated Green's Identity.} Let
$\underline u:=(u_h,u^\star)\in D(A)$ and $\underline
v_:=(v_h,v^\star)\in V$. Using the definition of the discrete
operators \eqref{4.eq7}-\eqref{4.eq8} and the definition of the weak
normal derivatives, it follows that
\begin{alignat*}{4}
(A\underline u,\underline v)_H+[\underline u,\underline v]_V & =
(c^{-2} \Delta_h^\kappa
u_h,v_h)_\Omega-(c^{-2}\gamma_h^t\jump{\partial_\nu
u^\star},v_h)_\Omega+(\Delta_\pm u^\star,v^\star)_{\mathbb B\setminus\Gamma}\\
& \quad +(\kappa\nabla u_h,\nabla v_h)_\Omega +(\nabla
u^\star,\nabla v^\star)_{\mathbb B\setminus\Gamma}\\
&=-\langle\jump{\partial_\nu u^\star},\gamma
v_h\rangle_\Gamma+\langle\partial_\nu^-
u^\star,\gamma^-v^\star\rangle_\Gamma-\langle\partial_\nu^+u^\star,\gamma^+v^\star\rangle_\Gamma\\
&=-\langle\underbrace{\jump{\partial_\nu u^\star}}_{\in
X_h},\underbrace{\gamma v_h-\gamma^+v^\star}_{\in
X_h^\circ}\rangle_\Gamma-\langle \underbrace{\partial_\nu^-
u^\star}_{\in Y_h^\circ},\underbrace{\jump{\gamma v^\star}}_{\in
Y_h}\rangle_\Gamma=0.
\end{alignat*}

\paragraph{The Yukawa double layer potential.} For the proof of the
surjectivity we need to introduce the double layer potential for the
Yukawa operator on $\Xi=\Gamma \cup \partial\mathbb B$
\[
\mathrm D\varphi:= \int_\Xi \nabla_{\mathbf y}E(\punto,\mathbf y)
\varphi(\mathbf y)\mathrm d\Xi(\mathbf y),
\]
where $E$ is given in \eqref{2.eq23}. This potential defines a
bounded operator
\begin{equation}\label{4.eq11}
\mathrm D:H^{1/2}(\Xi) \to H^1(\mathbb R^d\setminus\Xi)
\end{equation}
 such that
$\Delta (\mathrm D\varphi)=\mathrm D\varphi$ in $\mathbb
R^d\setminus\Xi$ for all $\varphi$. Two bounded integral operators
are associated to this potential
\[
\mathrm K:=\smallfrac12(\gamma^+_\Xi+\gamma^-_\Xi)\mathrm
D:H^{1/2}(\Xi) \to H^{1/2}(\Xi), \qquad \mathrm
W:=-\partial_{\nu,\Xi}^\pm\mathrm D:H^{1/2}(\Xi)\to H^{-1/2}(\Xi).
\]
The operator $\mathrm W$ is coercive \cite{McL00}
\begin{equation}\label{4.eq9}
\langle\mathrm W\varphi,\varphi\rangle_\Xi\ge C
\|\varphi\|_{1/2,\Xi}^2 \qquad \forall \varphi \in H^{1/2}(\Xi).
\end{equation}
Finally the adjoint of $\mathrm K$ satisfies $\mathrm
K^t=\smallfrac12 (\partial_{\nu,\Xi}^++\partial_{\nu,\Xi}^-)\mathrm
S$, where $\mathrm S$ is the single layer potential defined in
\eqref{2.eq25}.

\paragraph{Verification of the surjectivity property.} Let
$(f_h,f)\in V_h \times L^2(\mathbb B)=H$. As we did in Section
\ref{sec:2}, we start by finding $u^{\mathrm{nh}}\in H^2(\mathbb
B)\cap H^1_0(\mathbb B)$ such that $-\Delta
u^{\mathrm{nh}}+u^{\mathrm{nh}}=f$ in $\mathbb B$. We then look for
\begin{eqnarray*}
u_h \in V_h, & &  \lambda=(\lambda_h,\lambda_\partial)\in \underline
X_h:= X_h \times H^{-1/2}(\partial\mathbb B)\subset
H^{-1/2}(\Xi) \\
& & \varphi=(\varphi_h,0)\in \underline Y_h:=Y_h\times \{0\}\subset
H^{1/2}(\Xi)
\end{eqnarray*}
satisfying the equations:
\begin{subequations}\label{4.eq10}
\begin{alignat}{4}
\label{4.eq10a} (c^{-2} u_h,v_h)_\Omega+(\kappa\nabla u_h,\nabla
v_h)_\Omega -\langle \lambda_h,\gamma v_h\rangle_\Gamma
&=(c^{-2}f_h,v_h)_\Omega
&\qquad& \forall v_h \in V_h,\\
\label{4.eq10b} \langle\mu_h,\gamma u_h\rangle_\Gamma
+\langle\mu,\mathrm V\lambda\rangle_\Xi
-\langle\mu,\smallfrac12\varphi+\mathrm K\varphi\rangle_\Xi
&=\langle\mu_h,\gamma
u^{\mathrm{nh}}\rangle_\Gamma & & \forall \mu =(\mu_h,\mu_\partial)\in \underline X_h,\\
\label{4.eq10c} \langle \smallfrac12\lambda+\mathrm
K^t\lambda,\xi\rangle_\Xi+\langle\mathrm
W\varphi,\xi\rangle_\Xi&=\langle\partial_\nu
u^{\mathrm{nh}},\xi_h\rangle_\Gamma & & \forall \xi=(\xi_h,0) \in
\underline Y_h.
\end{alignat}
\end{subequations}
Note that because of the particular form of the space $\underline
Y_h$, the three bilinear forms where either $\xi$ or $\varphi$
appear are actually duality products in $\Gamma$. The bilinear form
of problem \eqref{4.eq10} is coercive in $H^1(\Omega)\times
H^{-1/2}(\Xi)\times H^{1/2}(\Xi)$ by \eqref{2.eq5} and
\eqref{4.eq9}. Therefore problem \eqref{4.eq10} has a unique
solution. The final step is the verification that the pair
\[
(u_h,u^\star):=(u_h,u^{\mathrm{nh}}+\mathrm D\varphi-\mathrm
S\lambda)
\]
belongs to $D(A)$ and that $(I-A)(u_h,u^\star)=(f_h,f)$. This
follows from several simple arguments that we next list. Because of
the potential form for $u^\star$ and the smoothness of
$u^{\mathrm{nh}}$ across $\Gamma$ it follows that
\begin{equation}\label{4.eq12}
\jump{\partial_\nu u^\star}=-\lambda_h \in X_h, \qquad \jump{\gamma
u^\star}=-\varphi_h \in Y_h.
\end{equation}
Also
\begin{equation}\label{4.eq14}
\Delta_\pm u^\star-u^\star=f
\end{equation}
and therefore $\Delta_\pm u^\star\in L^2(\mathbb B)$, while
$u^\star\in H^1(\mathbb B\setminus\Gamma)$ because
$u^{\mathrm{nh}}\in H^2(\mathbb B)$ and the mapping properties of
potentials \eqref{2.eq24}, \eqref{4.eq11} hold. Substituting
\eqref{4.eq12} in \eqref{4.eq10a} and using the definitions of the
discrete operators \eqref{4.eq7}-\eqref{4.eq8} it follows that
\begin{equation}\label{4.eq13}
u_h-\Delta_h^\kappa u_h+\gamma_h^t\jump{\partial_\nu u^\star}=f_h.
\end{equation}
If we test \eqref{4.eq10b} with elements $(0,\mu_\partial)\in
\{0\}\times H^{1/2}(\partial\mathbb B)$ it follows that
$\gamma_{\partial\mathbb B}u^\star=0$. If we test with $(\mu_h,0)\in
X_h \times\{0\}$, it follows that
\[
\gamma u_h-\gamma^+ u^\star=\gamma u_h-\mathrm
V\lambda-(\smallfrac12\varphi_h+\mathrm K\varphi)+\gamma
u^{\mathrm{nh}}\in X_h^\circ.
\]
Finally, equation \eqref{4.eq10c} is equivalent to asserting that
\[
\partial_\nu^+ u^\star=\partial_\nu
u^{\mathrm{nh}}-\smallfrac12\lambda_h+\mathrm K^t \lambda-\mathrm
W\varphi \in Y_h^\circ.
\]
The preceding arguments have shown that $(u_h,u^\star)\in D(A)$
while \eqref{4.eq14}-\eqref{4.eq13} proves that
$(I-A)(u_h,u^\star)=(f_h,f)$.

\paragraph{Conclusion.} We can now apply the theory for Cauchy
problems exposed in Section \ref{sec:A}. The statements of Theorem
\ref{3.th1} are a direct consequence of these results.

\section{Two simple extensions}\label{sec:5}

We now show two other situations where the techniques developed in
the preceding sections can be applied with minor modifications.

\subsection{Double layer potentials for sound-hard scattering}

Let us consider again the geometrical setting of Section
\ref{sec:1}. The problem of sound-hard scattering by an obstacle
occupying the region $\overline\Omega$ can be expressed with the
equations \eqref{1.eq1}-\eqref{1.eq1b} with the boundary condition
\eqref{1.eq1.b} substituted by $\partial_\nu u=0$ on $\Gamma$ for
all $t$. The assumptions on the initial data are the same as those
given in Section \ref{sec:1}. If we define $u^{\mathrm{free}}$ with
\eqref{1.eq3}-\eqref{1.eq3a}, the solution to this problem can be
expressed as $u=\mathcal D*\varphi+u^{\mathrm{free}}$, where
$\varphi:\mathbb R \to H^{1/2}(\Gamma)$ vanishes identically for
negative values of $t$.

The semidiscrete formulation follows from choosing a discrete space
\[
\mathbb P_0(\Gamma) \subset Y_h \subset H^{1/2}(\Gamma),
\]
discretizing the boundary condition
\begin{equation}\label{5.eq1}
\left[\begin{array}{l} \varphi_h:\mathbb R \to Y_h, \qquad
\varphi_h\equiv 0 \mbox{ in $(-\infty,0)$},\\[1.5ex]
\langle -\mathcal W*\varphi_h+\partial_\nu
u^{\mathrm{free}},\xi_h\rangle_\Gamma =0 \qquad \forall \xi_h \in
Y_h \quad \forall t,
\end{array}
\right.
\end{equation}
and proposing
\begin{equation}\label{5.eq2}
u_h = \mathcal D*\varphi_h + u^{\mathrm{free}}
\end{equation}
as approximation of $u$. Note that the following transmission
conditions are satisfied for all $t\ge 0$:
\[
\jump{\gamma u_h}\in Y_h, \qquad \jump{\partial_\nu u_h}=0, \qquad
\partial_\nu u_h \in Y_h^\circ.
\]

\begin{theorem}\label{5.th1}
The semidiscrete total field $u_h$ and the associated density
$\varphi_h$ given by \eqref{5.eq1} and \eqref{5.eq2} satisfy
\begin{eqnarray}\label{5.eq3}
u_h &\in& \mathcal C^2([0,\infty); L^2(\mathbb R^d))\cap \mathcal
C^1([0,\infty);H^1(\mathbb R^d\setminus\Gamma)),
\\
\label{5.eq4} \varphi_h &\in& \mathcal C^1([0,\infty);
H^{1/2}(\Gamma)).
\end{eqnarray}
Moreover, the energy
\begin{equation}\label{5.eq5}
\smallfrac12 \| \nabla u_h(t)\|_{\mathbb R^d\setminus\Gamma}^2
+\smallfrac12 \|\dot u_h(t)\|_{\mathbb R^d}^2
\end{equation}
is constant over time.
\end{theorem}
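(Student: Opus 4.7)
The plan is to mimic Sections \ref{sec:2} and \ref{sec:4}: treat the Galerkin constraints $\jump{\gamma u_h}\in Y_h$ and $\partial_\nu u_h\in Y_h^\circ$ as nonstandard transmission conditions across $\Gamma$, truncate the exterior with a cutoff ball $\mathbb B$, and recast the semidiscrete problem as an abstract second order Cauchy problem to which Section \ref{sec:A} applies. Compared with Section \ref{sec:2}, the Dirichlet and Neumann roles on $\Gamma$ are swapped, so the driving Yukawa operator in the surjectivity step will be the hypersingular $\mathrm W$ with coercivity \eqref{4.eq9}.

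For fixed $T>0$ I pick $R>0$ with $\mathrm{supp}\,u_h(t)\subset\mathbb B:=B(\mathbf 0;R)$ for $t\in[0,T]$. Combining the jump identities $\jump{\gamma(\mathcal D*\varphi_h)}=-\varphi_h$ and $\jump{\partial_\nu(\mathcal D*\varphi_h)}=0$ with \eqref{1.eq3b} and \eqref{5.eq1}, I identify $u_h:[0,T]\to H^1(\mathbb B\setminus\Gamma)$ as the solution of
\[
\ddot u_h=\Delta_\pm u_h,\quad\gamma_{\partial\mathbb B}u_h=0,\quad \jump{\gamma u_h}\in Y_h,\quad \jump{\partial_\nu u_h}=0,\quad \partial_\nu u_h\in Y_h^\circ,
\]
with initial data $(\mathrm E u_0,\mathrm E v_0)$. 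This fits Section \ref{sec:A} with $H=L^2(\mathbb B)$, $V=\{u\in H^1(\mathbb B\setminus\Gamma):\gamma_{\partial\mathbb B}u=0,\ \jump{\gamma u}\in Y_h\}$, $D(A)=\{u\in V:\Delta_\pm u\in L^2(\mathbb B),\ \jump{\partial_\nu u}=0,\ \partial_\nu u\in Y_h^\circ\}$, $A=\Delta_\pm$. Since $\mathbb P_0(\Gamma)\subset Y_h$, $\chi_\Omega$ lies in $D(A)\cap\ker A$, so $M:=\mathrm{span}\{\chi_\Omega\}$ (with the obvious extension when $\Omega$ is disconnected) is a space of rigid motions; I work with the seminorm $|u|_V^2=\|\nabla u\|_{\mathbb B\setminus\Gamma}^2$, promoted to a norm on $V$ by adding a functional that detects $M$, such as the means of $u$ over the connected components of $\Omega$.

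Green's identity is the quick step: integration by parts on either side of $\Gamma$, together with $\jump{\partial_\nu u}=0$, collapses the two weak-normal-derivative pairings on $\Gamma$ into $\langle\partial_\nu u,\jump{\gamma v}\rangle_\Gamma$, which vanishes because $\partial_\nu u\in Y_h^\circ$ pairs with $\jump{\gamma v}\in Y_h$.

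The main obstacle is surjectivity of $I-A$: given $f\in L^2(\mathbb B)$, I first solve $-\Delta u^{\mathrm{nh}}+u^{\mathrm{nh}}=f$ in $H^2(\mathbb B)\cap H^1_0(\mathbb B)$ and then seek $u=u^{\mathrm{nh}}+\mathrm D\varphi-\mathrm S\lambda$ on $\Xi=\Gamma\cup\partial\mathbb B$, with $\varphi=(\varphi_h,0)\in Y_h\times\{0\}$ supported on $\Gamma$ and $\lambda=(0,\lambda_\partial)\in\{0\}\times H^{-1/2}(\partial\mathbb B)$ supported on $\partial\mathbb B$. The ansatz automatically enforces $\jump{\gamma u}=-\varphi_h\in Y_h$ and $\jump{\partial_\nu u}=0$, while the remaining conditions $\partial_\nu u\in Y_h^\circ$ on $\Gamma$ and $\gamma_{\partial\mathbb B}u=0$ on $\partial\mathbb B$ become a mixed variational problem on $Y_h\times H^{-1/2}(\partial\mathbb B)$ whose diagonal blocks are $\mathrm W$ on $\Gamma$ and $\mathrm V$ on $\partial\mathbb B$, with off-diagonal coupling through $\pm(\smallfrac12 I+\mathrm K)$ and $\pm(\smallfrac12 I+\mathrm K^t)$ that cancels on the diagonal by adjointness of $\mathrm K$ and $\mathrm K^t$. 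Coercivity of the diagonal via \eqref{2.eq5} and \eqref{4.eq9} then yields unique solvability, and testing with pure $\Gamma$- or pure $\partial\mathbb B$-trial functions recovers precisely the two outstanding boundary conditions. With the three ingredients in place, the abstract Cauchy theory of Section \ref{sec:A} gives \eqref{5.eq3}--\eqref{5.eq4} and conservation of \eqref{5.eq5} on $[0,T]$; since $T$ was arbitrary, the statement follows on $[0,\infty)$.
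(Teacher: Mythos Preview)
Your proof is correct and follows the same overall strategy as the paper (rewrite as a transmission problem on a truncated ball, then apply the abstract Cauchy framework of Section~\ref{sec:A}), but the implementation differs in two linked respects. The paper imposes a \emph{Neumann} condition $\partial_{\nu,\partial\mathbb B}u_h=0$ on the cut-off boundary, which forces a two-dimensional rigid-motion space $M=\mathrm{span}\{\chi_\Omega,\chi_{\mathbb B\cap\Omega^+}\}$ but makes the surjectivity step very clean: one solves a Neumann Yukawa problem for $u^{\mathrm{nh}}$ and then a single coercive problem $\langle\mathrm W\varphi,\xi\rangle_\Xi=\langle\partial_\nu u^{\mathrm{nh}},\xi_h\rangle_\Gamma$ with $\varphi\in Y_h\times H^{1/2}(\partial\mathbb B)$, setting $u=u^{\mathrm{nh}}+\mathrm D\varphi$. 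You instead keep the \emph{Dirichlet} condition $\gamma_{\partial\mathbb B}u_h=0$, which shrinks the rigid-motion space to $\mathrm{span}\{\chi_\Omega\}$ but requires your mixed ansatz $u^{\mathrm{nh}}+\mathrm D\varphi-\mathrm S\lambda$ with densities supported on separate components of $\Xi$, and a $2\times2$ system whose coercivity relies on the skew-adjoint cancellation of the off-diagonal $\mathrm K$/$\mathrm K^t$ blocks. (A minor imprecision: since $\varphi$ and $\lambda$ live on disjoint pieces of $\Xi$, the $\smallfrac12 I$ contributions you mention are actually absent from the off-diagonal; only $\mathrm K$ and $\mathrm K^t$ appear, and their cancellation is exactly as you say.) The paper's route buys a shorter surjectivity argument at the cost of one extra rigid motion; yours trades that off in the opposite direction.
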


\begin{proof} The techniques for the proof of this result are very similar to those of Section \ref{sec:3}.
We will simply sketch the main steps. First of all we pick a fixed
time interval $[0,T]$ and choose a ball $\mathbb B$ that contains
the support of the solution for all $t \in [0,T]$. It is now more
convenient to include a Neumann boundary condition on the cut-off
boundary $\partial_{\nu,\partial\mathbb B}u_h=0$ for all $t$. The
spaces for the formulation as a Cauchy problem are the following:
\begin{eqnarray}\label{5.eq21}
H &:=& L^2(\mathbb B),\\
\label{5.eq22} V &:=& \{ u \in H^1(\mathbb
B\setminus\Gamma)\,:\,\jump{\gamma u}\in
Y_h\},\\
\label{5.eq23}D(A) &:=& \{ u \in V\,:\, \Delta_\pm u \in L^2(\mathbb
B),\quad \jump{\partial_\nu u}=0, \quad \partial_\nu u\in
Y_h^\circ,\quad
\partial_{\nu,\partial\mathbb B}u=0\}.
\end{eqnarray}
The relevant norms and seminorms are:
\[
\| u\|_H:=\| u\|_{\mathbb B}, \quad |u|_V:=\|\nabla u\|_{\mathbb
B\setminus\Gamma}, \quad \| u\|_V^2:=
|u|_V^2+\left|\frac1{|\Omega|}\int_\Omega u\right|^2+
\left|\frac1{|\mathbb B\cap\Omega^+|}\int_{\mathbb B
\cap\Omega^+}u\right|^2.
\]
The associated operator is the same as in Section \ref{sec:3},
namely $A:=\Delta_\pm$. The space of associated rigid motions is
two-dimensional $M:=\mathrm{span}\{ \chi_\Omega, \chi_{\mathbb B\cap
\Omega^+}\}$. The proof of the corresponding Green's Identity is
straightforward. For surjectivity we proceed in two steps. Given
$f\in L^2(\mathbb B)$, we first choose $u^{\mathrm{nh}}\in
H^2(\mathbb B)$ such that $ -\Delta
u^{\mathrm{nh}}+u^{\mathrm{nh}}=f$ in $\mathbb B$ and
$\partial_{\nu,\partial\mathbb B}u^{\mathrm{nh}}=0$. (Note that this
is possible by basic regularity theorems of elliptic problems on
smooth domains.) Next we look for
$\varphi=(\varphi_h,\varphi_\partial)\in \underline Y_h:=Y_h \times
H^{1/2}(\partial\mathbb B)\subset H^{1/2}(\Xi)$ that solves the
coercive variational problem
\[
\left[\begin{array}{l} \varphi \in \underline Y_h, \\[1.5ex]
\langle \mathrm W \varphi,\xi\rangle_\Gamma=\langle \partial_\nu
u^{\mathrm{nh}},\xi_h\rangle_\Gamma, \qquad \forall
\xi=(\xi_h,\xi_\partial) \in \underline Y_h,
\end{array}\right.
\]
and define $u=u^{\mathrm{nh}}+\mathrm D \varphi$. (Notations for
Yukawa potentials and operators are those of Section \ref{sec:4}.)
It is simple to prove that $u \in D(A)$ and $u-Au=f$, which finishes
the proof.
\end{proof}

\subsection{A tighter transparent boundary condition}\label{sec:5.2}

We now consider the transmission problem
\eqref{3.eq1}-\eqref{3.eq1comp} of Section \ref{sec:3}. Instead of
the transparent boundary condition that uses approximations of both
Cauchy data on $\Gamma$ it is possible to construct another
boundary-field formulation in the spirit of the BEM-FEM coupling
schemes of Martin Costabel \cite{Cos87} and Houde Han \cite{Han90}.
The boundary unknown is $\lambda:=\partial_\nu u^{\mathrm{ext}}$.
The identities of Cauchy data on the boundary and the transmission
conditions \eqref{3.eq1c}-\eqref{3.eq1d} can be used to produce the
following equations:
\begin{equation}\label{5.eq6}
\smallfrac12\gamma u-\mathcal K*\gamma u+\mathcal V*\lambda=0 \qquad
-\partial_\nu^\kappa u=\mathcal W*\gamma
u-\smallfrac12\lambda+\mathcal K^t*\lambda.
\end{equation}
In \eqref{5.eq6} and all similar future expressions, it will be
understood that $\gamma u(t)\equiv 0$ for $t<0$, although $u(t)$
will be defined for $t\ge 0$ and we will not look for a smooth
continuation of $u$ for negative values of $t$.

The boundary-field formulation looks for $u:[0,\infty) \to
H^1(\Omega)$ and $\lambda:\mathbb R \to H^{-1/2}(\Gamma)$ satisfying
initial conditions
\[
u(0)=u_0, \qquad \dot u(0)=v_0, \qquad \lambda\equiv 0 \mbox{ in
$(-\infty,0)$},
\]
and the equations
\begin{subequations}\label{5.eq7}
\begin{alignat}{4}\nonumber
 (c^{-2} \ddot u,v)_\Omega+(\kappa\nabla u,\nabla
v)_\Omega+\langle\mathcal W*\gamma u,\gamma
v\rangle_\Gamma\hspace{2cm}& &
&\\-\langle\smallfrac12\lambda-\mathcal K^t*\lambda,\gamma
v\rangle_\Gamma =0 & & \quad&
\forall v \in H^1(\Omega),\label{5.eq7a}\\
\label{5.eq7b} \langle\mu,\smallfrac12\gamma u-\mathcal K*\gamma
u\rangle_\Gamma + \langle\mu,\mathcal V*\lambda\rangle_\Gamma=0 & &
& \forall \mu \in H^{-1/2}(\Gamma),
\end{alignat}
\end{subequations}
for all $t$. The discrete version of these equations uses two spaces
\[
\mathbb P_0(\Omega)\subset V_h \subset H^1(\Omega), \qquad \mathbb
P_0(\Gamma)\subset X_h \subset H^{-1/2}(\Gamma),
\]
and looks for $u_h:[0,\infty) \to V_h$ and $\lambda_h: \mathbb R\to
X_h$ such that
\begin{equation}\label{5.eq8}
u_h(0)=u_{h,0}, \qquad \dot u_{h}(0)=v_{h,0}, \qquad \lambda_h\equiv
0 \mbox{ in $(-\infty,0)$},
\end{equation}
and for all $t$:
\begin{subequations}\label{5.eq9}
\begin{alignat}{4}\nonumber
(c^{-2} \ddot u_h,v_h)_\Omega+(\kappa\nabla u_h,\nabla
v_h)_\Omega+\langle\mathcal W*\gamma u_h,\gamma
v_h\rangle_\Gamma\hspace{2cm}& &
&\\-\langle\smallfrac12\lambda_h-\mathcal K^t*\lambda_h,\gamma
v_h\rangle_\Gamma =0 & & \quad&
\forall v_h \in V_h,
\label{5.eq9a} \\
\label{5.eq9b} \langle\mu_h,\smallfrac12\gamma u_h-\mathcal K*\gamma
u_h\rangle_\Gamma + \langle\mu_h,\mathcal
V*\lambda_h\rangle_\Gamma=0 & & & \forall \mu_h \in X_h.
\end{alignat}
\end{subequations}
Compared with the semidiscrete system \eqref{3.eq8}, this system has
one less group of equations and unknowns. The price to pay is the
fact that $u_h$ is affected by integral delay operators. The
reconstructed exterior solution is given by
\begin{equation}\label{5.eq10}
u_h^{\mathrm{ext}}:=\mathcal D*\gamma u_h-\mathcal S*\lambda_h.
\end{equation}
As  in Section \ref{sec:3}, we have to consider $u_h^{\mathrm{ext}}$
defined in $\mathbb R^d$ (and not only in $\Omega^+$) to have the
correct balance of energy.

\begin{theorem}\label{5.th2}
The semidiscrete total fields $(u_h,u_h^{\mathrm{ext}})$ and the
approximation to the normal derivative on the interface $\lambda_h$
given by \eqref{5.eq8}-\eqref{5.eq9}-\eqref{5.eq10} satisfy
\begin{eqnarray}
u_h & \in & \mathcal C^2([0,\infty); L^2(\Omega))\cap \mathcal
C^1([0,\infty);H^1(\Omega)),\\
u_h^{\mathrm{ext}} & \in & \mathcal C^2([0,\infty);L^2(\mathbb R^d)
\cap \mathcal C^1([0,\infty); H^1(\mathbb R^d\setminus\Gamma)),\\
\lambda_h &\in & \mathcal C([0,\infty);H^{-1/2}(\Gamma)).
\end{eqnarray} Moreover, the energy
\begin{equation}
\smallfrac12 \|\kappa^{1/2}\nabla u_h(t)\|_\Omega^2+\smallfrac12 \|
\nabla u_h^{\mathrm{ext}}(t)\|_{\mathbb R^d\setminus\Gamma}^2+
\smallfrac12\| c^{-1} \dot u_h(t)\|_\Omega^2+\smallfrac12\|\dot
u_h^{\mathrm{ext}}(t)\|_{\mathbb R^d\setminus\Gamma}
\end{equation}
is constant over time.
\end{theorem}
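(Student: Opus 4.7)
My plan is to follow the Cauchy-problem strategy of Section \ref{sec:4}, adapted to the fact that in (\ref{5.eq10}) the double-layer density is the slaved quantity $\gamma u_h$ rather than an independent Galerkin unknown. After fixing $T>0$ and picking a ball $\mathbb B:=B(\mathbf 0;R)$ large enough that $\mathrm{supp}\,u_h^{\mathrm{ext}}(t)\subset\mathbb B$ for all $t\in[0,T]$, the jump relations of the retarded potentials give
\[
\jump{\gamma u_h^{\mathrm{ext}}}+\gamma u_h=0, \qquad \jump{\partial_\nu u_h^{\mathrm{ext}}}=-\lambda_h\in X_h.
\]
A direct manipulation with the trace formulas shows that $\mathcal W*\gamma u_h-\smallfrac12\lambda_h+\mathcal K^t*\lambda_h=-\partial_\nu^+u_h^{\mathrm{ext}}$, so (\ref{5.eq9a}) becomes
\[
(c^{-2}\ddot u_h,v_h)_\Omega+(\kappa\nabla u_h,\nabla v_h)_\Omega-\langle\partial_\nu^+u_h^{\mathrm{ext}},\gamma v_h\rangle_\Gamma=0\qquad\forall v_h\in V_h,
\]
while (\ref{5.eq9b}) is equivalent to $\gamma u_h-\gamma^+u_h^{\mathrm{ext}}\in X_h^\circ$. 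Together with $\ddot u_h^{\mathrm{ext}}=\Delta_\pm u_h^{\mathrm{ext}}$ in $\mathbb B\setminus\Gamma$ and $\gamma_{\partial\mathbb B}u_h^{\mathrm{ext}}=0$, this gives a transmission problem in which, in contrast with Section \ref{sec:4}, the test space $Y_h$ disappears and $\jump{\gamma u_h^{\mathrm{ext}}}+\gamma u_h=0$ is imposed strongly.

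I would then encode the problem in the framework of Section \ref{sec:A} using
\begin{eqnarray*}
H&:=&V_h\times L^2(\mathbb B),\\
V&:=&\{(u_h,u^\star)\in V_h\times H^1_{\partial\mathbb B}(\mathbb B\setminus\Gamma)\,:\,\jump{\gamma u^\star}+\gamma u_h=0,\;\gamma u_h-\gamma^+u^\star\in X_h^\circ\},\\
D(A)&:=&\{(u_h,u^\star)\in V\,:\,\Delta_\pm u^\star\in L^2(\mathbb B),\;\jump{\partial_\nu u^\star}\in X_h\},
\end{eqnarray*}
with $A(u_h,u^\star):=(\Delta_h^\kappa u_h+\gamma_h^t\partial_\nu^+u^\star,\;\Delta_\pm u^\star)$ via (\ref{4.eq7})--(\ref{4.eq8}). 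A short check exploiting $\mathbb P_0(\Omega)\subset V_h$ and $\mathbb P_0(\Gamma)\subset X_h$ shows that any element of $V$ with vanishing gradients is zero, so $\|(u_h,u^\star)\|_V^2:=\|\kappa^{1/2}\nabla u_h\|_\Omega^2+\|\nabla u^\star\|_{\mathbb B\setminus\Gamma}^2$ is already a norm on $V$ and no auxiliary space of rigid motions is required.

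To verify the associated Green's identity I would compute $(A\underline u,\underline v)_H+[\underline u,\underline v]_V$ with (\ref{4.eq7})--(\ref{4.eq8}) and weak Green's formulas in $\Omega$ and $\mathbb B\cap\Omega^+$, reducing the expression to $\langle\partial_\nu^+u^\star,\gamma v_h-\gamma^+v^\star\rangle_\Gamma+\langle\partial_\nu^-u^\star,\gamma^-v^\star\rangle_\Gamma$. Using $\jump{\gamma v^\star}=-\gamma v_h$ gives $\gamma v_h-\gamma^+v^\star=-\gamma^-v^\star$, collapsing this to $\langle\jump{\partial_\nu u^\star},\gamma^-v^\star\rangle_\Gamma$. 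Since $\gamma^-v^\star=-(\gamma v_h-\gamma^+v^\star)\in X_h^\circ$ while $\jump{\partial_\nu u^\star}\in X_h$, the pairing vanishes.

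The main obstacle is the surjectivity of $I-A$. Given $(f_h,f)\in H$, I would pick $u^{\mathrm{nh}}\in H^2(\mathbb B)\cap H^1_0(\mathbb B)$ with $-\Delta u^{\mathrm{nh}}+u^{\mathrm{nh}}=f$ and seek $u^\star=u^{\mathrm{nh}}+\mathrm D\varphi-\mathrm S\lambda$ on $\Xi=\Gamma\cup\partial\mathbb B$, with $\varphi=(\gamma u_h,\varphi_\partial)\in H^{1/2}(\Xi)$ and $\lambda=(\lambda_h,\lambda_\partial)\in\underline X_h:=X_h\times H^{-1/2}(\partial\mathbb B)$. By construction this ansatz forces $\jump{\gamma u^\star}=-\gamma u_h$, $\jump{\partial_\nu u^\star}=-\lambda_h\in X_h$, and $-\Delta u^\star+u^\star=f$ in $\mathbb B\setminus\Gamma$. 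The remaining constraints --- the interior equation (the analogue of (\ref{4.eq10a})), the cutoff $\gamma_{\partial\mathbb B}u^\star=0$, and the discrete transmission $\gamma u_h-\gamma^+u^\star\in X_h^\circ$ --- must be bundled into a single coupled variational problem in $(u_h,\lambda,\varphi_\partial)\in V_h\times\underline X_h\times H^{1/2}(\partial\mathbb B)$. The delicate point is that the double-layer density on $\Gamma$ is not independent but slaved to $\gamma u_h$; assembling the resulting bilinear form symmetrically and verifying coercivity via the Yukawa estimates (\ref{2.eq5}) and (\ref{4.eq9}) together with positivity of the interior FEM form is the most involved step. Once surjectivity is established, the abstract results of Section \ref{sec:A} yield the regularity of $(u_h,u_h^{\mathrm{ext}})$ in the theorem; boundedness of $\jump{\partial_\nu\,\cdot\,}:D(A)\to H^{-1/2}(\Gamma)$ gives the regularity of $\lambda_h=-\jump{\partial_\nu u_h^{\mathrm{ext}}}$; and the energy identity is the abstract one attached to the Cauchy problem.
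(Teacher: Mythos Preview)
Your plan is correct and essentially coincides with the paper's own proof. The spaces $H$, $V$, $D(A)$, the operator $A$, and the norms you write down are exactly those used by the author (your condition $\gamma u_h-\gamma^+u^\star\in X_h^\circ$ is the same as the paper's $\gamma^-u^\star\in X_h^\circ$ once the strong constraint $\jump{\gamma u^\star}+\gamma u_h=0$ is in force, as you yourself observe in the Green's identity computation). Your verification of the Green's identity and your remark that no rigid-motion space is needed are correct and match the paper.

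The only point where you drift from the paper is the surjectivity ansatz. You set $\varphi=(\gamma u_h,\varphi_\partial)$ with $\varphi_\partial\in H^{1/2}(\partial\mathbb B)$ free, which gives you one more unknown than equations in the list you display. The paper simply takes $\varphi_\partial=0$, i.e., $\widetilde{\gamma u_h}:=(\gamma u_h,0)\in H^{1/2}(\Xi)$, and then solves a coupled problem for $(u_h,\lambda)\in V_h\times\underline X_h$ only (equations \eqref{5.eq12}); coercivity follows directly from \eqref{2.eq5}, \eqref{4.eq9}, and the interior form. With that simplification your sketch becomes the paper's argument verbatim.
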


\begin{proof}
The proof has a similar structure as that of Theorem \ref{3.th1}. We
will just point out the milestones of the proof. Choosing a ball
$\mathbb B$ that contains the support of the solution for all $t\in
[0,T]$ and adding a Dirichlet boundary condition on $\partial\mathbb
B$, we can consider a Cauchy problem satisfied by the pair
$(u_h,u_h^{\mathrm{ext}})$ with the following elements: the spaces
\begin{eqnarray*}
H &:=& V_h\times L^2(\mathbb B),\\
V &:=& \{ (u_h,u^\star) \in H\,:\jump{\gamma u^\star}+\gamma u_h=0,
\quad \gamma^- u^\star\in X_h^\circ,\quad \gamma_{\partial\mathbb
B} u^\star=0\},\\
D(A)&:=& \{ (u_h,u^\star)\in V\,:\, \Delta_\pm u^\star \in
L^2(\mathbb B), \quad \jump{\partial_\nu u^\star}\in X_h\},
\end{eqnarray*}
the norms
\[
\|(u_h,u^\star)\|_H^2:=\|c^{-1}u_h\|_\Omega^2+\|u^\star\|_{\mathbb
B}^2 \qquad \|(u_h,u^\star)\|_V^2:=\|\kappa^{1/2}\nabla
u_h\|_\Omega^2+\|\nabla u^\star\|_{\mathbb B\setminus\Gamma}^2
\]
and the operator
\[
A(u_h,u^\star):=(\Delta_h^\kappa u_h+\gamma_h^t \partial_\nu^+
u^\star, \, \Delta_\pm u^\star)
\]
(see \eqref{4.eq7} and \eqref{4.eq8}). The associated Green's
Identity is straightforward to prove. To show surjectivity of $I-A$,
we proceed as follows. Given $(f_h,f) \in H$, we first take
$u^{\mathrm{nh}}\in H^2(\mathbb B)\cap H^1_0(\mathbb B)$ satisfying
$-\Delta u^{\mathrm{nh}}+u^{\mathrm{nh}}=f$ in $\mathbb B$. We then
construct
\begin{equation}\label{5.eq11}
(u_h,u^\star)=(u_h,u^{\mathrm{nh}}+\mathrm D \widetilde{\gamma
u_h}-\mathrm S\lambda)
\end{equation}
where
\[
u_h \in V_h, \qquad \lambda=(\lambda_h,\lambda_\partial) \in
\underline X_h:= X_h \times H^{-1/2}(\partial\mathbb B)\subset
H^{-1/2}(\Xi)
\]
is the solution of
\begin{subequations}\label{5.eq12}
\begin{alignat}{4}\nonumber
(c^{-2} u_h,v_h)_\Omega+(\kappa \nabla u_h,\nabla v_h)_\Omega
\hspace{1.5cm}& & &
\\
+\langle \mathrm W\widetilde{\gamma u_h},\widetilde{\gamma
v_h}\rangle_\Xi -\langle \smallfrac12\lambda-\mathrm
K^t\lambda,\widetilde{\gamma v_h}\rangle_\Xi & = (c^{-2}
f_h,v_h)_\Omega+\langle \partial_\nu u^{\mathrm{nh}},\gamma
v_h\rangle_\Gamma \qquad  \forall v_h \in V_h,\\
\langle \mu,\smallfrac12\widetilde{\gamma u_h}-\mathrm
K\widetilde{\gamma u_h}\rangle_\Xi +\langle \mu,\mathrm
V\lambda\rangle_\Xi &=\langle\mu_h,\gamma
u^{\mathrm{nh}}\rangle_\Gamma \qquad \forall
\mu=(\mu_h,\mu_\partial)\in \underline X_h,
\end{alignat}
\end{subequations}
and $\widetilde{\gamma u_h}=(\gamma u_h,0)\in H^{1/2}(\Gamma) \times
\{ 0\}\subset H^{1/2}(\Xi)$. Note that problem \eqref{5.eq12} is
associated to a coercive bilinear form and is therefore uniquely
solvable. It is then simple to prove that $(u_h,u^\star)\in D(A)$
and $A(u_h,u^\star)=(f_h,f)$, which finishes the proof.
\end{proof}

\section{Two negative results}\label{sec:6}

This last section shows two results where Galerkin discretizations
lead to problems with non-constant energy. This fact does not mean
that the discretizations are not valid (they can still be stable),
but at least shows how delicate the energy balance is when
discretized integral operators are used.

\subsection{A direct method for sound-soft
scattering}\label{sec:6.1}

Let us consider again the problem of Section \ref{sec:1}. The
solution of \eqref{1.eq1}-\eqref{1.eq1b} can be decomposed using the
free wave (solution to \eqref{1.eq3}-\eqref{1.eq3a}) and Kirchhoff's
formula for the scattered wave:
\begin{equation}\label{6.eq1}
u=u^{\mathrm{free}}-\mathcal D*\gamma u^{\mathrm{free}}-\mathcal
S*\lambda, \qquad \lambda:=\partial_\nu^+ u-\partial_\nu
u^{\mathrm{free}},
\end{equation}
where $\lambda:\mathbb R \to H^{-1/2}(\Gamma)$ is causal and in the
potential expression $\mathcal D*\gamma u^{\mathrm{free}}$ we have
to understand that $\gamma u^{\mathrm{free}}:\mathbb R \to
H^{1/2}(\Gamma)$ is a causal function even if $u^{\mathrm{free}}$ is
only defined for positive values of $t$. The indirect decomposition
\eqref{1.eq6} used $u=\mathcal S*\psi+u^{\mathrm{free}}$ was
naturally extended to $\mathbb R^d$, with the result that
$u(t)\equiv 0$ in $\Omega$ for all $t$. The extension to the
interior domain of \eqref{6.eq1} is
\[
u=u^{\mathrm{free}} \chi_{\Omega^+}-\mathcal D*\gamma
u^{\mathrm{free}}-\mathcal S*\lambda,
\]
because the potential expression $\mathcal D*\gamma
u^{\mathrm{free}}+\mathcal S*\lambda$ vanishes identically in
$\Omega$ by Kirchhoff's formula. The discrete version of this
process computes
\begin{equation}\label{6.eq2}
\left[\begin{array}{l}\lambda_h:\mathbb R \to X_h \qquad
\lambda_h\equiv 0 \mbox{ in $(-\infty,0)$},\\[1.5ex]
\langle \mu_h,\mathcal V*\lambda_h\rangle_\Gamma
=\langle\mu_h,\smallfrac12\gamma u^{\mathrm{free}}-\mathcal K*\gamma
u^{\mathrm{free}}\rangle_\Gamma \qquad \forall \mu_h \in X_h, \quad
\forall t,
\end{array}\right.
\end{equation}
and then constructs the total field
\begin{equation}\label{6.eq3}
u_h:=u^{\mathrm{free}}\chi_{\Omega^+}-\mathcal D*\gamma
u^{\mathrm{free}}-\mathcal S*\lambda_h.
\end{equation}
Note that $u_h$ satisfies the {\em non--homogeneous} transmission
problem
\begin{subequations}\label{6.eq4}
\begin{alignat}{4}
\ddot u_h=\Delta_\pm u_h, & & &\\
\jump{\gamma u_h}=0, \\
\gamma u_h \in X_h^\circ, \\
\jump{\partial_\nu u_h}+\partial_\nu u^{\mathrm{free}}\in X_h.
\end{alignat}
\end{subequations}
(Compare with \eqref{2.eq4} and note that the condition at
$\partial\mathbb B$ can always be added for finite time intervals.)
The fact that this problem is a non--homogeneous version of a
problem that is conservative gives a first hint that the natural
energy of this problem will not be constant. Also, applying
integration by parts and \eqref{6.eq4}, we can prove that
\[
\frac{\mathrm d}{\mathrm dt}\Big(\frac12\| \dot u_h\|_{\mathbb
R^d}^2+\frac12\|\nabla u_h\|_{\mathbb
R^d\setminus\Gamma}^2\Big)=-\langle \partial_\nu
u^{\mathrm{free}},\gamma \dot u_h\rangle_\Gamma.
\]
This shows that energy is not constant.

\subsection{Transparent conditions with one equation}\label{sec:6.2}

Consider again the propagation problem in free space \eqref{3.eq1}.
Instead of using two integral identities as in Sections \ref{sec:3}
we can work with a single integral equation in the spirit of the
one-equation coupling of BEM-FEM of Claus Johnson and Jean-Claude
N\'{e}d\'{e}lec \cite{JohNed80, Say09}. After space Galerkin discretization,
the coupled system becomes an evolution problem that looks for
$u_h:[0,\infty) \to V_h$ and $\lambda_h:\mathbb R\to X_h$ such that
\[
u_h(0)=u_{h,0}, \qquad \dot u_h(0)=v_{h,0}, \qquad \lambda_h \equiv
0 \mbox{ in $(-\infty,0)$},
\]
and for all $t\ge 0$
\begin{subequations}
\begin{alignat}{4}
(c^{-2}\ddot u_h,v_h)_\Omega+(\kappa\nabla u_h,\nabla
v_h)-\langle\lambda_h,\gamma v_h\rangle_\Gamma =0 & \qquad & &
\forall v_h\in V_h,\\
\langle \mu_h,\smallfrac12 \gamma u_h-\mathcal K*\gamma
u_h\rangle_\Gamma +\langle\mu_h,\mathcal V*\lambda_h\rangle_\Gamma
=0 & & & \forall \mu_h \in X_h.
\end{alignat}
\end{subequations}
Note that the first of these equations coincides with \eqref{3.eq8a}
(the first discrete equation of the method in Section \ref{sec:3}),
while the second one is \eqref{5.eq9b} (the second equation in the
method of Section \ref{sec:5.2}). The discrete exterior solution is
defined with \eqref{5.eq10}. The same kind of manipulations that we
have been applied above shows that the pair
$(u_h,u_h^{\mathrm{ext}})$ satisfies
\begin{alignat*}{4}
\ddot u_h =\Delta_h^\kappa u_h-\gamma_h^t \jump{\partial_\nu
u_h^{\mathrm{ext}}},\\
\ddot u_h^{\mathrm{ext}}=\Delta_\pm u_h^{\mathrm{ext}},\\
\jump{\gamma u_h^\mathrm{ext}}+\gamma u_h=0,\\
\gamma^- u_h^{\mathrm{ext}}\in X_h^\circ,\\
\jump{\partial_\nu u_h^{\mathrm{ext}}}\in X_h,
\end{alignat*}
with the discrete operators defined in \eqref{4.eq7}-\eqref{4.eq8}.
It is simple to see that the transmission conditions above lead to a
choice of spaces like \eqref{5.eq21}-\eqref{5.eq23}, while the
operator itself is \eqref{4.eq21}. This mismatch between domain of
the operator and operator leads to lack of energy conservation,
namely, for smooth solutions
\[
\frac{\mathrm d}{\mathrm dt}\Big(\frac12\| c^{-1}\dot u_h
\|_\Omega^2+\frac12\|\kappa^{1/2}\nabla u_h\|_\Omega^2+\frac12\|
\dot u_h^{\mathrm{ext}}\|_{\mathbb
R^d\setminus\Gamma}^2+\frac12\|\nabla u_h^{\mathrm{ext}}\|_{\mathbb
R^d\setminus\Gamma}^2\Big)=\langle \partial_\nu^-
u_h^{\mathrm{ext}},\jump{\gamma \dot
u_h^{\mathrm{ext}}}\rangle_\Gamma.
\]
Furthermore, by recasting the evolution problem as a first order
system, it is possible to show that the corresponding operator
$\mathcal A$ in \eqref{A.eq21} is not maximal dissipative (cf.
\cite[Chapter 4]{Kes89}) and therefore it cannot be the
infinitesimal generator of a contractive strongly continuous
semigroup. As in the case of the discrete Kirchhoff formula of
Section \ref{sec:6.1}, this does not mean that the discretization
leads to an unstable method, but it is at least a hint that some
problems might arise due to the lack of energy conservation.

\section{Some abstract arguments about wave equations}\label{sec:A}

In this section we summarize the abstract results on Cauchy problems
for second order equations that we have used throughout the article.
The results are elementary consequences of the theory of strongly
continuous groups of isometries in the Hilbert setting that can be
found in basic texts as \cite{EngNag06} or \cite{Kes89}.

\paragraph{Cauchy problems for abstract wave equations.}
Let us consider three Hilbert spaces with continuous inclusions
\begin{equation}\label{A.eq1}
D(A)\subset V \subset H.
\end{equation}
The inner produces and norms of $V$ and $H$ will be recognized with
the name of the space as a subscript. Let $A:D(A) \to H$ be a
bounded linear operator satisfying:
\begin{itemize}
\item[(a)] an abstract Green's identity
\begin{equation}\label{A.eq2}
(A u,v)_H+(u,v)_V=0 \qquad \forall u\in D(A)\quad v \in V
\end{equation}
\item[(b)] a surjectivity condition
\begin{equation}\label{A.eq3}
I-A: D(A) \to H \mbox{ is onto,}
\end{equation}
$I$ being the inclusion operator.
\end{itemize}
Then, for arbitrary $(u_0,v_0)\in D(A)\times V$, the initial value
problem
\begin{subequations}\label{A.eq4}
\begin{alignat}{4}
\ddot u=A u & & \qquad & t \ge 0,\\
u(0)=u_0, \\
\dot u(0)=v_0,
\end{alignat}
\end{subequations}
has a unique solution
\begin{equation}\label{A.eq5}
u \in \mathcal C^2([0,\infty);H) \cap \mathcal C^1([0,\infty);V)\cap
\mathcal C([0,\infty);D(A)).
\end{equation}
Moreover, the energy
\[
e(t):= \smallfrac12 \| \dot u(t)\|_H^2 +\smallfrac12 \| u(t)\|_V^2
\]
is constant as a function of $t \in [0,\infty)$. Also, the
injections \eqref{A.eq1} are dense. The above result can be proved
by considering the unbounded operator in $V\times H$
\begin{equation}\label{A.eq21}
\mathcal A:=\left[\begin{array}{cc} 0 & I \\ A & 0
\end{array}\right] : D(\mathcal A)   \to V \times
H, \qquad D(\mathcal A) :=D(A)\times V \subset V\times H
\end{equation}
and showing that $\pm\mathcal A$ are maximal dissipative and
therefore $\mathcal A$ is the infinitesimal generator of a
$C_0-$group of isometries in the Hilbert space $V\times H$. The
possibility of reducing all the hypotheses to properties that have
to be satisfied by $A$ is related to the fact that $D(\mathcal A)$
is the product space $D(A)\times V$, where the second space is the
same as the first space in the Hilbert space $V\times H$ where the
problem is set.

\paragraph{Problems displaying rigid motions.} Assume that we have
three spaces in the same conditions above and that in $V$ we also
have a seminorm $|\punto|_V$, proceeding from a semi-inner product
$[\punto,\punto]_V$. A space of rigid motions of the system
\eqref{A.eq4} is a {\em finite dimensional space} $M$ such that
\[
M \subset D(A), \qquad M \subset \ker (A), \qquad |m|_V=0 \quad
\forall m\in M,
\]
and, if $P:H\to M$ denotes the orthogonal projection onto $M$,
\begin{equation}\label{A.eq8}
C_1\| v\|_V^2 \le |v|_V^2+\|Pv\|_H^2 \le C_2\|v\|_V^2 \qquad \forall
v \in V.
\end{equation}
In particular, this implies that $|v|_V=0$ if and only if $v\in M$.
The energy of the system is now measured in the following form
\begin{equation}\label{A.eq6}
e(t):= \smallfrac12\|\dot u(t)\|_H^2+\smallfrac12 |u(t)|_V^2.
\end{equation}
If we start with initial conditions $(m,0)\in M\times \{0\}$, the
solution remains constant over time and energy vanishes. If we start
with $(0,m)\in \{0\}\times M$, then the solution is $u(t)=m\, t$.
This solution has only kinetic energy and potential energy vanishes
identically
\[
e(t)=\smallfrac12 \| m\|_H^2.
\]
The conditions are:
\begin{itemize}
\item[(a')] a (modified) abstract Green identity
\begin{equation}\label{A.eq7}
(A u,v)_H +[ u,v]_V=0 \qquad \forall u\in D(A) \quad v \in V,
\end{equation}
\item[(b)] the surjectivity condition $I-A:D(A)\to H$ is onto.
\end{itemize}
With these hypotheses, problem \eqref{A.eq4} has a unique solution
with the regularity of \eqref{A.eq5} for any $(u_0,v_0)\in
D(A)\times V$. Energy, defined with \eqref{A.eq6}, is constant in
time.

\begin{remark}\rm To see how the frame of evolution equations with
rigid motions fits into the general frame, we need to consider the
spaces
\[
H_0:=\{ u\in H\,:\, Pu=0\}=\{ u\in H\,:\,(u,m)_H=0\quad \forall m
\in M\},
\]
\[
V_0:=V\cap H_0, \qquad D(A_0):=D(A)\cap H_0.
\]
In $V_0$ we consider the norm $|\punto|_V$ (see \eqref{A.eq8}). The
operator $A$ has range in $H_0$, since \eqref{A.eq7} implies that
\[
0=(Au,m)_H+[u,m]_V=(Au,m)_H \qquad \forall m \in M.
\]
Finally, the surjectivity condition implies that $I-A_0:D(A_0)\to H$
is surjective by the same argument.
\end{remark}

\paragraph{Waves in free space.} Problem \eqref{1.eq3}-\eqref{1.eq3a}, that deals
with propagation of initial conditions by the wave equation in free
space does not fit in the simple frame of this section unless we
apply some kind of cut-off argument. The main difficulty stems from
the definition of the energy space.
\begin{itemize}
\item[(1)] It is well known that the closure of the space of smooth
compactly supported functions with the norm $\|\nabla
\punto\|_{\mathbb R^3}$ is not a subset of $L^2(\mathbb R^3)$.
Instead, the resulting space can be characterized as a weighted
Sobolev space (see \cite{Han71, GalRio85, AmrGirGir94}). The simple
frame with three spaces ($D(A)$ as the domain of the operator, $V$
as the potential energy space and $H$ as the space where kinetic
energy is measured) cannot be used for the corresponding second
order Cauchy problem. Instead, the problem has to be rewritten as a
first order system with four Hilbert spaces involved (compare with
\eqref{A.eq21}). Conditions on the operator need to be written in a
much more complicated form in order to show existence, uniqueness
and energy conservation.
\item[(2)] The two dimensional case is even more involved. As
explained in \cite{GalRio85}, the closure of the space of smooth
compactly supported functions with $\|\nabla\punto\|_{\mathbb R^2}$
cannot be understood as a space of functions in any natural way.
This adds another complication to the four space setting that is
needed in the three dimensional case.
\end{itemize}
Since the present work deals with compactly supported initial data
and speed of propagation of waves is finite, the strategy of cutting
off the analytical domain can be applied for any finite time
interval, which is enough for our purposes.

\bibliographystyle{abbrv}
\bibliography{RefsTDIE}

\end{document}